\newcounter{capitalcounter}
\newcounter{claimcounter}
\newcounter{myfootnote}[page]
\newtheorem{lemma}{Lemma}[section]
\newtheorem{corollary}[lemma]{Corollary}
\newtheorem{theorem}[lemma]{Theorem}
\newtheorem{prop}[lemma]{Proposition}
\theoremstyle{definition}
\newtheorem{defn}[lemma]{Definition}
\newtheorem{claim}[lemma]{Claim}
\theoremstyle{remark}
\newtheorem*{rmk}{Remark}
\newtheorem*{example}{Example}
\newtheorem*{nte}{Note}
\global\long\def\a{\alpha}
\global\long\def\d{\delta}
\global\long\def\e{\varepsilon}
\global\long\def\N{\mathbb{N}}
\global\long\def\P{\mathbb{P}}
\global\long\def\GG{G}
\global\long\def\HH{\mathcal{H}}
\global\long\def\E{\mathbb{E}}
\global\long\def\re{\begin{rmk}}
\global\long\def\mark{\end{rmk}}
\global\long\def\ex{\begin{example}}
\global\long\def\ple{\end{example}}
\global\long\def\no{\begin{nte}}
\global\long\def\ted{\end{nte}}
\global\long\def\en{\begin{compactenum}}
\global\long\def\um{\end{compactenum}}
\global\long\def\li{\begin{compactitem}}
\global\long\def\st{\end{compactitem}}
\global\long\def\de{\begin{defn}}
\global\long\def\fn{\end{defn}}
\global\long\def\cor{\begin{corollary}}
\global\long\def\ary{\end{corollary}}
\global\long\def\lem{\begin{lemma}}
\global\long\def\ma{\end{lemma}}
\global\long\def\arr{\begin{array}}
\global\long\def\ay{\end{array}}
\global\long\def\pr{\begin{proof}}
\global\long\def\oof{\end{proof}}
\global\long\def\barr#1{\overline{#1}}
\title{Hamiltonicity in random graphs is born resilient}
\author{Richard Montgomery\footnote{University of Birmingham, Birmingham, B15 2TT, UK; r.h.montgomery@bham.ac.uk}}
\date{}
\begin{document}

\maketitle
\begin{abstract}
Let $\{G_M\}_{M\geq 0}$ be the random graph process, where $G_0$ is the empty graph on $n$ vertices and subsequent graphs in the sequence are obtained by adding a new edge uniformly at random. For each $\varepsilon>0$, we show that, almost surely, any graph $G_M$ with minimum degree at least 2 is not only Hamiltonian (as shown independently by Bollob\'as, and Ajtai, Koml\'os and Szemer\'edi), but remains Hamiltonian despite the removal of any set of edges, as long as at most $(1/2-\varepsilon)$ of the edges incident to each vertex are removed. We say that such a graph is \emph{$(1/2-\varepsilon)$-resiliently Hamiltonian}. Furthermore, for each $\e>0$, we show that, almost surely, each graph $G_M$ is not $(1/2+\varepsilon)$-resiliently Hamiltonian.
These results strengthen those by Lee and Sudakov on the likely resilience of Hamiltonicity in the binomial random graph.

For each $k$, we denote by $G^{(k)}$ the (possibly empty) maximal subgraph with minimum degree at least $k$ of a graph $G$. That is, the $k$-\emph{core} of $G$. Krivelevich, Lubetzky and Sudakov have shown that, for each $k\geq 15$, in almost every random graph process $\{G_M\}_{M\geq 0}$, every non-empty $k$-core is Hamiltonian.
 We show that, for each $\varepsilon>0$ and $k\geq k_0(\varepsilon)$, in almost every random graph process $\{G_M\}_{M\geq 0}$, every non-empty $k$-core is $(1/2-\varepsilon)$-resiliently Hamiltonian, but not $(1/2+\varepsilon)$-resiliently Hamiltonian.

Results on resilience of Hamiltonicity in the random graph process have been independently shown by Nenadov, Steger and Truji\'c.
\end{abstract}

\section{Introduction}\label{1introhamres}
The appearance of Hamilton cycles in random graphs has been studied since the pioneering work by Erd\H{o}s and R\'enyi in 1959~\cite{ER59}. As Hamilton cycles, by definition, contain every vertex in their parent graph, any graph containing a Hamilton cycle has minimum degree at least 2. As is well-known, if $p= (\log n+\log\log n-\omega(1))/n$, then the binomial random graph $G(n,p)$ almost surely has a vertex with degree at most 1, and hence is not Hamiltonian. Conversely, improving on breakthrough results by P\'osa~\cite{posa76} and Korshunov~\cite{kor76}, in 1983 Bollob\'as~\cite{bollo83} and Koml\'os and Szemer\'edi~\cite{KS83} independently showed that,
if $p= (\log n+\log\log n+\omega(1))/n$, then $G(n,p)$ is almost surely Hamiltonian.

Suppose instead we consider the $n$-vertex random graph process $\{G_M\}_{M\geq 0}$, where $G_0$ is the graph with $n$ vertices and no edges, and each subsequent graph $G_M$ is formed from $G_{M-1}$ by the addition of an edge uniformly at random, until the complete graph $G_{\binom{n}{2}}$ is formed. Independently, Bollob\'as~\cite{bollo84} and Ajtai, Koml\'os and Szemer\'edi~\cite{AKS85} showed that, in almost every random graph process, the very edge which is added to raise the minimum degree to 2 will also create a Hamilton cycle. The random graph process is strongly linked to binomial random graphs (see, for example, Section~\ref{modelswitch}), and thus we can infer from this beautiful result the previous known results on the likely Hamiltonicity of $G(n,p)$. The result by Bollob\'as demonstrates that the likely obstacle to the existence of a Hamilton cycle in a random graph is the existence of some vertex of degree less than 2. In this paper, we will show that, once this obstacle is overcome, it is very likely that the graph is not only Hamiltonian, but \emph{resiliently} Hamiltonian.

The general study of the resilience of different graph properties in the random graph was initiated by Sudakov and Vu~\cite{SV08}, and has since seen the consideration of a variety of different properties (see, for example,~\cite{ABET16,BCW11,DKMS08}). Given a graph $G$ satisfying a property $\mathcal{P}$, the \emph{local resilience} of $G$ with respect to $\mathcal{P}$ is the largest integer $r$ such that, given any graph $H\subset G$ with maximum degree at most $r$, the graph $G-H$ has the property $\mathcal{P}$. Sudakov and Vu~\cite{SV08}
 showed that, if $p>\log^4 n/n$, then the local resilience of Hamiltonicity in $\GG(n,p)$ is almost surely $(1/2+o(1))pn$, and conjectured that this remains true as long as $p=\omega(\log n/n)$. When $p=\omega(\log n/n)$, Frieze and Krivelevich~\cite{FK08} and Ben-Shimon, Krivelevich and Sudakov~\cite{BSKS1,BSKS2} gave increasingly strong bounds for such likely local resilience of Hamiltonicity in $G(n,p)$, before Lee and Sudakov~\cite{LS12} confirmed this conjecture. More precisely, Lee and Sudakov showed that, for every $\e>0$, there exists some constant $C$, such that, if $p\geq C\log n/ n$, then the local resilience of $\GG(n,p)$ is almost surely between $(1/2-\e)pn$ and $(1/2+\e)pn$.

What hope have we of improving the known range of probability $p$ for which $G(n,p)$ is likely to be resiliently Hamiltonian? As we decrease $p$ within the range of likely Hamiltonicity, the degree sequence of $G(n,p)$ typically becomes increasingly irregular. In particular, the likely minimum degree of $G(n,p)$ will drift proportionally away from the expected average degree $(n-1)p$. For example, when $p=(\log n+2\log\log n)/n$, the minimum degree of $G(n,p)$ is almost surely $2$ or $3$; the likely local resilience of Hamiltonicity cannot then be more than 1. The definition of local resilience here is rather weak, considering few edge sets for removal. In order to better study the resilience of Hamiltonicity in $G(n,p)$ for such values of $p$, the following definition is thus preferable (see also~\cite{BSKS2}).
\begin{defn}
We say a graph $G$ is \emph{$\a$-resilient with respect to the property $\mathcal{P}$} if, given any subgraph $H\subset G$, with $d_H(v)\leq \a d_G(v)$ for each $v\in V(G)$, the graph $G-H$ has property~$\mathcal{P}$.
\end{defn}
If $p=\omega(\log n/n)$, then it is very likely that each vertex in $G(n,p)$ has degree $(1+o(1))pn$. Therefore, the result by Lee and Sudakov~\cite{LS12} quoted above implies that, if $p=\omega(\log n/n)$, then $\GG(n,p)$ is almost surely $(1/2-o(1))$-resiliently Hamiltonian. In this paper, we extend this by proving that, in almost every random graph process, every Hamiltonian graph is $(1/2-o(1))$-resiliently Hamiltonian.

\begin{theorem}\label{hamres}
Let $\e>0$. In almost every $n$-vertex random graph process $\{G_M\}_{M\geq 0}$, the following is true for each $0\leq M\leq \binom{n}{2}$. If $\delta(G_M)\geq 2$, then $G_M$ is $(1/2-\e)$-resiliently Hamiltonian, but not $(1/2+\e)$-resiliently Hamiltonian.
\end{theorem}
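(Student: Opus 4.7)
The plan is to handle the two directions of Theorem~\ref{hamres} separately, since the non-resilience upper bound is much easier than the resilience lower bound.

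\textbf{Non-resilience.} For every $M$ with $\delta(G_M)\ge 2$ I exhibit $H\subseteq G_M$ with $d_H(v)\le (1/2+\e)d_{G_M}(v)$ for all $v$ and $G_M-H$ not Hamiltonian. When $\delta(G_M)=2$, pick a vertex $v$ of degree~$2$ and remove one incident edge; since $(1/2+\e)\cdot 2>1$ this is allowed, and the resulting graph has a vertex of degree~$1$, killing Hamiltonicity. For larger $M$ I use a bipartition obstruction: choose a uniformly random partition $V=A\cup B$ with $||A|-|B||\in\{1,2\}$ so that $|A|\ne|B|$, and let $H$ be all edges inside $A$ together with all edges inside $B$. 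A Chernoff-and-union bound shows that for every vertex $v$ of not-too-small degree at most $(1/2+\e)d_{G_M}(v)$ of its neighbours lie on its own side of the partition; the boundedly many low-degree vertices can be placed by hand into the smaller side so as to have no intra-side neighbours. The residual graph $G_M-H$ is then bipartite with $|A|\ne|B|$, hence not Hamiltonian.

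\textbf{Resilience.} This is the main content. I combine two-round exposure with the absorbing method: write $G_M=G_{M_1}\cup E_2$ where $E_2$ is a small random reservoir kept aside for sprinkling, and do the bulk of the construction in $G_{M_1}$, using $E_2$ to close up and to absorb awkward vertices. On the high-degree part $V\setminus L$, where $L=\{v:d_{G_M}(v)\le C\}$, one can apply the P\'osa-rotation and booster machinery used by Lee and Sudakov~\cite{LS12} to find, after deletion of the adversary's edges, a Hamilton-like cycle through $V\setminus L$ together with enough rotations to reroute it through any chosen short path. The new feature compared with the $G(n,p)$ setting of Lee--Sudakov is the presence of vertices of bounded degree: at such a $v$ the adversary may delete at most $\lfloor(1/2-\e)d_{G_M}(v)\rfloor$ edges, so the Hamilton cycle is forced to enter and leave $v$ using one of only boundedly many admissible pairs of edges. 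The plan is to show first that $L$ is sparse and well-separated in $G_M$, so that around each $v\in L$ the forced local structure is a short path $P_v$ ending in $V\setminus L$; then to build, in $G_{M_1}$ and independently of the adversary, an \emph{absorbing path} containing, for every potential adversary-induced forced configuration at every vertex of $L$, a local gadget capable of splicing in the realised~$P_v$; finally, to use the random edges of $E_2$ both to close the cycle and to guarantee that the requisite booster and rotation edges survive in the adversary-reduced graph.

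\textbf{Main obstacle.} The principal difficulty is the universal quantifier over $M$: resilience is not monotone as $M$ grows, because new edges raise the adversary's budget at their endpoints, so one cannot directly deduce the result for $M+1$ from the result for $M$. I would handle this by proving resilience at a logarithmic sequence of checkpoints starting from the hitting time $M^{*}=\min\{M:\delta(G_M)\ge 2\}$ and using sprinkling from the next checkpoint to cover intermediate $M$ by a sandwiching argument. The other major technical obstacle is constructing an absorber that works \emph{simultaneously} for every adversary-induced choice of forced pairs at every low-degree vertex: the absorbing path must be rich enough to contain a compatible gadget for every possible adversary choice, not only for a single one. Designing this robust absorber, and verifying that it can be planted inside $G_{M_1}$ with the required probability, is where I expect the bulk of the technical work to lie.
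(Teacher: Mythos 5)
There is a genuine gap in the resilience direction: the two-round exposure you propose cannot survive the adversary. Any reservoir $E_2$ that is a small fraction of $G_M$ contains, at a typical vertex $v$, only an $\eta$-fraction of $v$'s edges, so the adversary can delete \emph{every} edge of $E_2$ incident to $v$ while spending only an $\eta$-fraction of its budget $(1/2-\e)d_{G_M}(v)$. Hence "using the random edges of $E_2$ to close the cycle and to guarantee that the requisite booster and rotation edges survive in the adversary-reduced graph" fails as stated; this is precisely the obstruction the paper flags at the end of Section~\ref{secsketch} (sprinkling "cannot withstand the later removal of edges"). The paper's substitute is a union bound over all sparse subgraphs $H_0$: for each one it exhibits so many candidate boosters, concentrated at single vertices, that some vertex $x$ has more than half of its $G$-neighbours as candidate partners, whence a booster survives in \emph{every} $(1/2+\e)$-residual subgraph (Lemmas~\ref{rotateresil} and~\ref{extend}); moreover two-edge boosters are used to make the counting work at low densities. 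Your absorbing path carries the same burden: a gadget through a low-degree vertex $v$ uses $O(1)$ of $v$'s edges, so the adversary destroys any single prescribed gadget essentially for free, and making the absorber robust to all adversary choices forces you back to the "many candidates, majority survives" counting — at which point the exposure/absorber framework has bought nothing. The paper avoids absorbers entirely: after deletion every vertex keeps at least $2$ edges, low-degree vertices are pairwise far apart, and small dense sets do not exist, so the residual graph contains a sparse spanning $(2,C/p)$-expander (Lemma~\ref{smallexpall}) to which the booster machinery (Theorem~\ref{newnewtheorem}) applies directly.

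Two further soft spots. In the non-resilience direction, when $M=\Theta(n\log n)$ the number of vertices of degree at most $\e\log n$ is $n^{c}$ for some $c=c(\e)>0$, not $O(1)$, and Chernoff plus a union bound fails for every vertex of degree $o(\log n/\e^2)$; so "boundedly many low-degree vertices placed by hand" does not cover the relevant range. The paper instead takes a maximum cut $A\cup B$, which gives $d(v,A)\le d(v,B)$ for all $v\in A$ with no concentration argument, and then moves one carefully chosen vertex across to unbalance the parts (Lemma~\ref{nonresil}). Finally, your checkpoint-plus-sandwiching plan for the universal quantifier over $M$ is problematic exactly because resilience is non-monotone: adding a single edge at a degree-$2$ vertex raises the adversary's budget there from $0$ to $1$, so resilience at a checkpoint does not transfer to nearby values of $M$. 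The paper sidesteps this by proving each property in $G(n,p)$ with failure probability $o(n^{-3})$, transferring to $G_{n,M}$ with failure probability $o(n^{-2})$ via Lemma~\ref{switch}, and union bounding over all $\binom{n}{2}$ values of $M$ (with some extra care for the non-monotone separation property, Lemma~\ref{mepath}).
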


Ben-Shimon, Krivelevich and Sudakov~\cite{BSKS2} used a further, more general, definition of resilience when studying the Hamiltonicity of random graphs. Let $\mathbf{k}$ be a sequence of $n$ integers. A graph $G$ with the vertex set $[n]$ is said to be $\mathbf{k}$-resilient with respect to the property $\mathcal{P}$ if, for any subgraph $H\subset G$ with $d_H(i)\leq \mathbf{k}(i)$, for each $i$, the graph $G-H$ has the property $\mathcal{P}$. Ben-Shimon, Krivelevich and Sudakov showed that, for every $\e>0$ and $p= (\log n+\log\log n+\omega(1))/n$, the random graph $G=\GG(n,p)$ is almost surely $\mathbf{k}$-resiliently Hamiltonian with
\[
\mathbf{k}(i)=\left\{\begin{array}{ll}
d_G(i)-2 & \text{ if }d_G(i)\leq pn/100, \\
(1/3-\e)d_G(i) & \text{ otherwise}. \\
\end{array}\right.
\]
The methods of this paper could be used to extend this result to use $\mathbf{k}(i)=d_G(i)-2$ if $d_G(i)\leq pn/100$, and $\mathbf{k}(i)=(1/2-\e)d_G(i)$ otherwise; the constant $1/2$ would then be tight. We will concentrate, however, on proving the cleaner statement of Theorem~\ref{hamres}.

For large $k$, we will also use our methods to demonstrate that, in almost every random graph process, the \emph{$k$-core} is born \emph{resiliently} Hamiltonian. From the results mentioned above, we know that the possible obstruction to Hamiltonicity in any random graph $G(n,p)$ is almost surely vertices of degree 0 or 1. If we iteratively remove vertices of degree 0 or 1, are we likely to find a Hamiltonian subgraph? This process would find the largest subgraph with minimum degree at least 2, a structure known as the $2$-core.
For more general $k$, the $k$-core of a graph $G$, denoted $G^{(k)}$, is the (possibly empty) maximal subgraph with minimum degree at least $k$. This concept was introduced by Bollob\'as~\cite{bollo84}, who showed that, for each $k\geq 3$, there exists a constant $C(k)$ such that, if $p\geq C(k)/n$, then the $k$-core of $G(n,p)$ is almost surely non-empty and $k$-connected. Among results for more general $k$, \L uczak~\cite{Luc87} showed that, if $p=(\log n+6\log\log n+\omega(1))/3n$, then the 2-core of $G(n,p)$ is almost surely Hamiltonian.
In other words, well before the random graph is reliably Hamiltonian, it is likely that, if we iteratively remove vertices with degree 0 or 1, then the remaining graph is Hamiltonian.

For each $k\geq 3$, Bollob\'as, Cooper, Fenner and Frieze~\cite{BCFF00} showed that there is some $C(k)$ for which, if $p\geq C(k)/n$, then the $k$-core of $G(n,p)$ is almost surely Hamiltonian, where $C(k)=(2+o_k(1))k^3$. For each $k\geq 15$, Krivelevich, Lubetzky and Sudakov~\cite{KLS14} subsequently showed that, in almost every random graph process, every non-empty $k$-core is Hamiltonian. We will show that, in almost every random graph process, every non-empty $k$-core is furthermore $(1/2-o_k(1))$-resiliently Hamiltonian, as follows.

\begin{theorem}\label{coreres} For each $\e>0$, there exists some $k_0$ such that, for each $k\geq k_0$, in almost every $n$-vertex random graph process $\{G_M\}_{M\geq 0}$, the following holds for each $0\leq M\leq \binom{n}{2}$. If $G_M^{(k)}\neq \emptyset$, then $G_M^{(k)}$ is $(1/2-\e)$-resiliently Hamiltonian, but not $(1/2+\e)$-resiliently Hamiltonian.
\end{theorem}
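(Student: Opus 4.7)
The plan is to handle the two directions of the theorem separately, leveraging the machinery developed for Theorem~\ref{hamres} together with the structural understanding of the $k$-core in the random graph process.

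For the upper bound (that $G_M^{(k)}$ is \emph{not} $(1/2+\e)$-resiliently Hamiltonian), I would use a cut-construction. Given $M$ with $G := G_M^{(k)} \neq \emptyset$, the aim is to build a partition $V(G) = A \sqcup B$ with $|B| - |A| \geq 2$ (of appropriate parity) such that $d_A(v), d_B(v) \leq (1/2+\e/2) d_G(v)$ for every $v$. A uniformly random balanced partition already achieves the concentration at each vertex with probability $1-\exp(-\Omega(\e^2 k))$ by a Chernoff bound, since $\delta(G_M^{(k)}) \geq k \geq k_0(\e)$; a union bound over the at most $n$ vertices and polynomially many relevant times $M$ yields such a cut simultaneously in every $k$-core along the process. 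After minor rebalancing of sizes by moving one or two vertices, I delete every edge inside $B$. The result is bipartite with $|B| > |A|$, hence non-Hamiltonian, and the deletion removes at most $(1/2+\e)d_G(v)$ edges at each $v$.

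For the lower bound ($(1/2-\e)$-resilient Hamiltonicity), the key observation is that after removing any $H \subseteq G_M^{(k)}$ with $d_H(v) \leq (1/2-\e) d_{G_M^{(k)}}(v)$, the remaining graph has minimum degree at least $(1/2+\e)k$, which is large when $k \geq k_0(\e)$. In particular the residual graph inherits strong expansion properties from the $k$-core. I would then re-run the proof of Theorem~\ref{hamres} with this much stronger starting minimum degree in place of $\delta \geq 2$. Roughly: first isolate an expanding ``kernel'' inside the $k$-core and a collection of low-degree attachments (paths and short trees of vertices of degree close to $k$); second, check that after the adversarial deletion the kernel remains a sub-linear expander and each low-degree vertex retains a constant fraction of its incident edges; third, carry out the P\'osa rotation-and-extension argument inside the surviving expander, using the absorber/booster framework to close a Hamilton cycle that also visits every low-degree attachment.

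The main obstacle, in my view, is the very first time $M^*$ at which $G_{M^*}^{(k)}$ becomes non-empty. At that moment the $k$-core already has its full qualitative complexity -- linear order, with pendant path-like structures of minimum-degree vertices attached to a dense kernel -- yet the local budget $(1/2-\e)k$ at a degree-$k$ attachment vertex is only a (large) constant, giving little slack to guarantee that its incident edges can be included in the final cycle. To deal with this, I would follow the fine-structure analysis of Krivelevich, Lubetzky and Sudakov~\cite{KLS14} describing the typical shape of the $k$-core at birth, and then argue that the $(1/2-\e)$-bounded adversary can destroy only a vanishing fraction of kernel-to-periphery edges, so every peripheral path still has multiple live attachments to the kernel through which it can be absorbed. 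Propagating the result from $M^*$ to all larger $M$ then amounts to checking robustness under the addition of further random edges, which follows from a standard sprinkling coupling with $G(n,p)$ of the kind already used to derive Theorem~\ref{hamres}.
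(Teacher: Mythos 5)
Your proposal has a genuine gap in the negative direction. You propose a uniformly random balanced partition and a Chernoff bound giving failure probability $\exp(-\Omega(\e^2k))$ at each vertex, followed by a union bound over the $n$ vertices. But $k$ is a \emph{constant} (depending only on $\e$), so the per-vertex failure probability is a constant and the union bound over $n$ vertices (let alone over all relevant $M$) is vacuous: with high probability a random balanced partition will have many degree-$k$ vertices whose neighbourhood is badly split. The paper avoids randomness here entirely (Lemma~\ref{nonresil}): it takes a partition $A\cup B$ of $U$ \emph{maximising} $e_G(A,B)$, which deterministically guarantees $d(v,A)\leq d(v,B)$ for every $v\in A$ and vice versa; the only remaining work is to unbalance the partition, which is done by locating a single vertex $x$ with $d(x,B)\leq(1+\e)d(x,A)$ via an edge-counting argument, after first discarding the (few) vertices of degree below $1/\e$ so that moving $x$ perturbs each neighbour's split by at most a $(1/2+\e)$ fraction. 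Your cut construction as stated does not produce the required subgraph $H$.

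On the positive direction, your outline identifies the right difficulty (the $k$-core at birth, where the adversary's budget at a degree-$k$ vertex is only a constant) but does not supply the key lemma. The paper's route is: (i) prove that, with probability $1-o(n^{-3})$ in $G(n,p)$ for all relevant $p$, \emph{every} spanning $1/2$-residual subgraph of $G^{(k)}$ contains a sparse spanning $(2,C/p)$-expander (Lemma~\ref{coresparseexpand}, split into the critical case $p\leq 4k/n$, where $\delta(H)\geq k/2\geq \delta pn/2$ feeds directly into Lemma~\ref{mindegexp3}, and the supercritical case, which additionally uses one counting claim from~\cite{KLS14} and Lemma~\ref{dpncore}); then (ii) invoke the general Theorem~\ref{newnewtheorem} together with \L uczak's result that the non-empty $k$-core has linear order. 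In particular the paper explicitly does \emph{not} need the fine structure of the $k$-core at birth from~\cite{KLS14}, and your final step of "propagating from $M^*$ to all larger $M$ by sprinkling" does not work for resilience: adding random edges increases degrees and hence the adversary's per-vertex deletion budget, so resilient Hamiltonicity is not monotone under sprinkling. The paper instead establishes the property for each $p$ separately with error $o(n^{-3})$ and transfers to every $M$ via Lemma~\ref{switch}.
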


Our methods to prove Theorem~\ref{coreres} are quite different from those used by Krivelevich, Lubetzky and Sudakov in~\cite{KLS14}, and owe more to the work by Lee and Sudakov~\cite{LS12} on the resilience of Hamiltonicity in $G(n,p)$. The techniques we use offer an alternative proof of the result by Krivelevich, Lubetzky and Sudakov~\cite{KLS14} that, for large $k$, in almost every random graph process, when the $k$-core is non-empty it is Hamiltonian. This alternative proof would be simpler than that found in~\cite{KLS14}, but the techniques only work for larger values of $k$ (perhaps, with appropriate optimization, for $k$ slightly below 100) rather than for all $k\geq 15$ as in~\cite{KLS14}.

The heart of our paper is in the proof of a more general result, Theorem~\ref{newtheorem}, which provides an (almost-sure) rule that determines certain induced large subgraphs of $G(n,p)$ are resiliently Hamiltonian. It is then relatively simple to confirm that the subgraphs in Theorem~\ref{hamres} and~\ref{coreres} almost surely satisfy this rule. In order to state Theorem~\ref{newtheorem}, we require the following two definitions.

\begin{defn}
We say $H$ is an \emph{$\alpha$-residual} subgraph of a graph $G$ if, for each $v\in V(H)$, we have $d_H(v)\geq \a d_{G[V(H)]}(v)$.
\end{defn}

\de\label{expdefn} A graph $H$ is a \emph{$2$-expander} if it is connected and, for every subset $U\subset V(H)$ with $|U|\leq |H|/8$, we have $|N(U)|\geq 2|U|$.
\fn

\begin{theorem}\label{newtheorem}
For each $\e>0$, there exists $\d,C>0$ such that, if $p\geq C/n$, then $G=G(n,p)$ has the following property with probability $1-o(n^{-3})$. If $H$ is a $(1/2+\e)$-residual subgraph of $G$, with $|H|\geq \e n$, which contains a spanning 2-expander with at most $\delta pn^2$ edges, then $H$ is Hamiltonian.
\end{theorem}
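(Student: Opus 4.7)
The proof plan is to apply the P\'osa rotation-extension technique: the 2-expander $T$ supplies the expansion that drives the rotations inside $H$, while the random graph $G=G(n,p)$, together with the $(1/2+\e)$-residuality of $H$, supplies the abundance of pseudo-random edges used to extend a maximum path until it becomes a Hamilton cycle. The first step is to establish quasi-random edge-count properties that $G$ satisfies with probability $1-o(n^{-3})$: for every pair of subsets $A,B\subseteq V(G)$ with $|A|,|B|\geq \e n/16$, one has $e_G(A,B)=(1\pm\e/50)|A||B|p$, and similarly for induced densities. Standard Chernoff estimates combined with a union bound over the $4^n$ choices of $(A,B)$ suffice, provided $C$ is large enough. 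Crucially, these properties depend only on $G$ (not on the adversarially chosen $H$), so an adversary has no way to circumvent them.

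Next, fix any $H$ and $T$ meeting the hypothesis and let $P$ be a longest path in $H$. Using the 2-expansion of $T$ as input to the standard P\'osa rotation argument, I would produce rotation endpoint sets $S_L, S_R \subseteq V(H)$, each of linear size. I would then boost these sets to size at least $(1/2-\e/3)|V(H)|$ by iterated (``double'') rotations, combined with the stronger expansion that $H$ itself inherits from $G$ once the pseudo-randomness of Step~1 is in force. Any edge of $H$ joining a vertex of $S_L$ to a vertex of $S_R$ is a \emph{booster}: adding it to $P$ either extends $P$ (contradicting maximality) or produces a Hamilton cycle.

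The final step is to exhibit at least one booster edge inside $H$. By the pseudo-randomness of $G$, $e_G(S_L,S_R) = \Omega(n^2 p)$. The $(1/2+\e)$-residual condition means that each $v\in S_L$ loses at most $(1/2-\e)\,d_{G[V(H)]}(v)$ edges when passing from $G[V(H)]$ to $H$, so a direct accounting shows that $\Omega(\e n^2 p)$ of the $G$-edges between $S_L$ and $S_R$ survive in $H$, which is in particular positive. Taking any such booster strictly increases the path length (or closes it); iterating at most $n$ times yields a Hamilton cycle.

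The hard part is pushing the rotation endpoint sets $S_L,S_R$ strictly above the $(1/2-\e)|V(H)|$ threshold. Raw 2-expansion on subsets of size at most $|V(H)|/8$ only yields linear-sized endpoint sets, and a $(1/2-\e)$-adversary could in principle erase every edge between two such small sets. The resolution is that once the pseudo-random structure of $G$ is exposed, $H$ itself expands far more strongly than the bare 2-expander $T$, and iterated P\'osa rotations, carried out in $H$ rather than in $T$, can push $|S_L|$ and $|S_R|$ past the critical $(1/2-\e)|V(H)|$ barrier. This is precisely where the $1/2$ constant in the residuality becomes tight, mirroring the Lee--Sudakov approach for $G(n,p)$.
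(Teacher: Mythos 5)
The gap is exactly where you locate it: pushing the rotation endpoint sets past size roughly $|H|/2$. Your proposed resolution does not work at density $p=\Theta(1/n)$. The pseudo-randomness you establish in your first step controls $e_G(A,B)$ only for \emph{linear}-sized $A,B$, so it says nothing about how small sets expand in $H$; the only small-set expansion available is the bare $|N(U)|\geq 2|U|$ of the spanning $2$-expander, and P\'osa's lemma applied to that expander (Lemma~\ref{posalemma}) caps the endpoint sets at $|H|/8$. To grow them further you must rotate along edges of $H$ outside the sparse expander, but $H$ is adversarial and has up to $\Theta(pn^2)$ edges, so you forfeit the union-bound leverage on which every step of this kind rests: a \emph{fixed sparse} subgraph $H_0$ occurs in $G$ with probability small enough that summing over all such $H_0$ (Proposition~\ref{expectedH}) is beaten by a failure probability of $\exp(-\Omega(\e^5pn^2))$, whereas there is no comparable bound over all admissible residual subgraphs $H$. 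This is why the paper never rotates in $H$ itself. Instead it uses two-edge boosters: one rotation inside the sparse $H_0$ produces, for linearly many $v$, at least $(1/2+\e/8)|H|$ vertices $u$ each equipped with many candidate edges $e\in E(H-H_0)$ such that $\{uv,e\}$ is a booster for $H_0$ (Lemma~\ref{rotatemany}); a second \emph{sparse} graph $H_1\subset H-H_0$ is extracted that hits enough of these candidate sets (Lemma~\ref{rotateresil}); and only then is the residual degree condition used, in exactly the accounting you describe, to find the closing edge $uv$ in $H$ (Lemma~\ref{extend}). The boosters are added to $H_0$, which stays sparse through the $n$ iterations, not to $P$ or to $H$. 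Your ``iterated rotations in $H$'' is essentially the Lee--Sudakov route, which, as the paper notes, genuinely requires vertex degrees $\omega(\log n/\log\log n)$ and hence fails in the regime $p\geq C/n$ covered by this theorem.

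A secondary, repairable inaccuracy: the boosters are not all pairs in $S_L\times S_R$ but only pairs $\{u,w\}$ with $w$ an endpoint reachable after fixing $u$, so the final edge count must be run over a structure of the form $\{(x,V_x):x\in X\}$ with $|X|=\Omega(\e n)$ and $|V_x|\geq(1/2+\Omega(\e))|H|$, as in Lemma~\ref{extend}, rather than over a product of two large sets.
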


The constant $1/2$ appearing in Theorem~\ref{newtheorem} cannot be reduced. We confirm this with the following lemma.

\begin{lemma}\label{nonresil}
For each $\e>0$, there exists $C$ such that, if $p\geq C/n$, then, with probability $1-o(n^{-3})$, $G(n,p)$ contains no subset $U\subset V(G)$, with $|U|\geq \e n$, for which $G[U]$ is $(1/2+\e)$-resiliently Hamiltonian.
\end{lemma}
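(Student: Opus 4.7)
The approach is, for each candidate $U \subset V(G)$ with $|U| \geq \varepsilon n$, to exhibit an explicit bipartite attack certifying that $G[U]$ is not $(1/2+\varepsilon)$-resiliently Hamiltonian. I aim to find a partition $U = A \cup B$ with $|A| > |B|$ such that $|N_{G[U]}(v) \cap A| \leq (1/2+\varepsilon) d_{G[U]}(v)$ for every $v \in A$; taking $H$ to be the set of edges of $G[U]$ with both endpoints in $A$ then gives $d_H(v) \leq (1/2+\varepsilon)d_{G[U]}(v)$ for all $v \in U$, while $G[U]-H$ has $A$ as an independent set strictly larger than $B$, forbidding a Hamilton cycle by the standard alternation count.

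Set $\eta := \varepsilon/10$ and let $K = K(\varepsilon)$ be a suitable large constant. I first establish a structural property of $G = G(n,p)$: for $C = C(\varepsilon)$ large, with probability $1-o(n^{-3})$, every $U$ with $|U| \geq \varepsilon n$ has fewer than $\eta^2|U|$ vertices $v$ satisfying $d_{G[U]}(v) \leq K$. To prove this, fix $(U, S)$ with $S \subset U$ of size $\lceil \eta^2|U|\rceil$. For each $v \in S$, the event $\{d_{G[U]}(v) \leq K\}$ is implied by $\{|N_G(v) \cap (U \setminus S)| \leq K\}$, and these implying events depend on pairwise disjoint sets of potential edges across $v \in S$, hence are mutually independent. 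By a Chernoff bound, since the binomial mean $(|U|-|S|)p \geq (1-\eta^2)\varepsilon C$ is much larger than $K$ when $C$ is large, each probability is at most $\exp(-\varepsilon C/20)$. Union bounding over $S$ at cost $(e/\eta^2)^{\eta^2|U|}$ and over $U$ at cost $\binom{n}{|U|} \leq (e/\varepsilon)^{|U|}$ gives total failure probability $o(n^{-3})$, provided $\varepsilon C$ is chosen sufficiently large in terms of $1/\varepsilon$.

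Now fix $U$ and assume the structural property. I construct the partition probabilistically: independently place each $v \in U$ in $A$ with probability $q := 1/2+\eta$. A Chernoff bound gives $|A|-|B| > \eta|U|$ with probability at least $3/4$. Call $v$ \emph{bad} if $v \in A$ and $|N_{G[U]}(v) \cap A| > (1/2+\varepsilon) d_{G[U]}(v)$. Another Chernoff bound yields $\Pr[v \text{ bad}] \leq q \exp(-c\varepsilon^2 K) \leq \eta^2$ whenever $d_{G[U]}(v) \geq K$, once $K$ is large enough in $\varepsilon$; the remaining vertices number at most $\eta^2|U|$ by the structural property, contributing at most $q\eta^2|U|$ to the expected number of bad vertices. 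Hence $\E[\#\text{bad}] \leq 2\eta^2|U|$, so Markov gives at most $4\eta^2|U|$ bad vertices with probability at least $1/2$. With positive probability both events hold; fix such a partition and move every bad vertex from $A$ to $B$. This only decreases $|N(u) \cap A|$ for each remaining $u \in A$, so creates no new bad vertex, while $|A|-|B|$ decreases by at most $8\eta^2|U| < \eta|U|$, keeping $A$ strictly larger. The final partition fulfils the attack requirements.

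The main obstacle is constant-calibration: the Chernoff exponent $\varepsilon C$ in the structural property must beat the union-bound costs $\log(e/\varepsilon)$ per vertex of $U$ and $\log(e/\eta^2)$ per vertex of $S$, forcing $C$ to be polynomial in $1/\varepsilon$; simultaneously $K$ must be large enough in $\varepsilon$ for the single-vertex bound $\exp(-c\varepsilon^2 K) \leq \eta^2$ to hold, so that the Markov step has enough slack to absorb the low-degree vertices admitted by the structural property.
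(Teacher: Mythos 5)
Your proof is correct, but it reaches the conclusion by a genuinely different route from the paper. The paper takes the \emph{max-cut} partition $A\cup B$ of $U$ (so every vertex has at least half its $G[U]$-neighbours on the other side), deletes $G[A]\cup G[B]$ to leave an unbalanced bipartite graph, and, in the troublesome case $|A|=|B|$, uses the two-sided edge-count regularity of Lemma~\ref{AKS1equiv} to locate a \emph{single} vertex $x$ with $d(x,B)\leq(1+\e)d(x,A)$ that can be shifted across; the additive $+1$ perturbation this causes at the neighbours of $x$ is absorbed because $x$ is chosen away from the few vertices of degree at most $1/\e$ (the paper's Claim~\ref{mornclaim}, itself deduced from Lemma~\ref{AKS1equiv}). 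You instead build the partition randomly with bias $1/2+\eta$ toward $A$, control each high-degree vertex individually by Chernoff, evict the few bad vertices, and delete only $G[A]$, so your obstruction is an independent set larger than half of $U$ rather than an unbalanced bipartition (the same alternation count underlies both). Your version is more self-contained: it never invokes the global edge regularity between linear-sized sets, only your directly-proved ``few low-degree vertices'' fact, whose union-bound-with-independence proof is sound (the events $\{|N_G(v)\cap(U\setminus S)|\leq K\}$ for $v\in S$ do depend on disjoint edge slots, and they are correctly implied by $d_{G[U]}(v)\leq K$). The paper's version is shorter given that Lemma~\ref{AKS1equiv} is needed elsewhere anyway, and perturbs the partition minimally, at the cost of the separate $|A|=|B|$ case. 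Both yield $C=\mathrm{poly}(1/\e)$. Two minor points you should make explicit: you may assume $\e$ is small (say $\e<1/8$), since $(1/2+\e)$-resilience is monotone decreasing in $\e$, which is what guarantees $8\eta^2|U|<\eta|U|$; and the bad event for $v$ is independent of the placement of $v$ itself since $v\notin N(v)$, which justifies the factor $q$ in $\Pr[v\text{ bad}]\leq q\exp(-c\e^2K)$.
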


In Section~\ref{secprelim}, we recall some simple properties of random graphs. In Section~\ref{secnewthm}, we sketch the proof of Theorem~\ref{newtheorem}, before proving it in detail. Lemma~\ref{nonresil} is proved in Section~\ref{secnonres}. For Theorems~\ref{hamres} and~\ref{coreres}, we demonstrate that large sets resiliently expand in the same manner, proving Theorem~\ref{newnewtheorem} from Theorem~\ref{newtheorem} in Section~\ref{secresil}. Using Theorem~\ref{newnewtheorem} and Lemma~\ref{nonresil}, Theorems~\ref{hamres} and~\ref{coreres} are proved in Sections~\ref{secfull} and~\ref{seccore}, respectively. In the rest of this section, we will cover our basic notation.

\medskip
\noindent\textbf{Remark.} Theorem~\ref{hamres} has also been shown in independent work by Nenadov, Steger and Truji\'c~\cite{NST18}. In addition, they obtain a similar result for matchings in the random graph process, and show that in almost every random graph process $\{G_M\}_{M\geq 0}$, for each $M\geq (1/6+o(1))n\log n$ the 2-core of $G_M$ is $(1/2-o(1))$-resiliently Hamiltonian. These results could also be obtained using similar methods to those used in this paper. The methods in~\cite{NST18}, however, are significantly different from those used here, and in particular do not work when $M=o(n\log n/\log\log n)$ (c.f.\ Theorem~\ref{coreres}) without, at the least, significant extra analysis (see also the discussion at the end of Section~\ref{secsketch}).


\subsection{Notation}
A graph $G$ has vertex set $V(G)$, edge set $E(G)$, minimum degree $\delta(G)$ and maximum degree $\Delta(G)$, and $|G|=|V(G)|$. When $A\subset V(G)$, $N(A)$ is the set of neighbours of vertices in $A$ in $V(G)\setminus A$. When $x\in V(G)$, $d(x)$ is the degree of $x$ in $G$. Where multiple graphs are considered, we refer to the relevant graph in the subscript, using, for example, $d_G(x)$.

For a graph $G$ and a vertex set $U\subset V(G)$, $G[U]$ and $G-U$ are the induced subgraphs of $G$ with vertex sets $U$ and $V(G)\setminus U$, respectively. For any graphs $G$ and $H$, $G-H$ is the graph with vertex set $V(G)$ and edge set $E(G)\setminus E(H)$, and $G\cup H$ is the graph with vertex set $V(G)\cup V(H)$ and edge set $E(G)\cup E(H)$. For a graph $G$ and a set $E\subset \binom{V(G)}{2}$, $G+E$ and $G-E$ are the graphs with vertex set $V(G)$ and edge sets $E(G)\cup E$ and $E(G)\setminus E$, respectively. For any graph $G$ and vertex sets $A,B\subset V(G)$, $e_G(A,B)$ is the number of pairs $(x,y)$ with $xy\in E(G)$, $x\in A$ and $y\in B$.

If $f(n)/g(n)\to 0$ as $n\to\infty$, then we say $g(n)=\omega(f(n))$ and $f(n)=o(g(n))$. If there exists a constant $C$ for which $f(n)\leq C g(n)$ for all $n$, then we say $f(n)=O(g(n))$ and $g(n)=\Omega(f(n))$. If $f=O(g(n))$ and $f(n)=\Omega(g(n))$, then we say that $f(n)=\Theta(g(n))$. The binomial random graph $G(n,p)$ has vertex set $[n]=\{1,\ldots,n\}$ and edges chosen independently at random with probability $p$. We denote the complete graph on $[n]$ by $K_n$.


\section{Preliminaries}\label{secprelim}
In this section, we first cover how we move between the binomial random graph and the random graph process, before giving a few simple properties of the binomial random graph.

\subsection{Model switching}\label{modelswitch}
We will often find it convenient to show properties hold in $G(n,p)$, before moving to the random graph process using the following standard lemma (see, for example, Bollob\'as~\cite{bollorand}), where $G_{n,M}$ is chosen uniformly at random from the graphs with vertex set $[n]$ and $M$ edges.


\lem\label{switch} Let $n\in\N$, $1\leq M\leq \binom{n}{2}$ and $p=M/\binom{n}{2}$, and let $\mathcal{P}$ be a graph property. Then
\[
\hspace{3.4cm}\P(\GG_{n,M}\text{ has property }\mathcal{P})\leq 2n\cdot\P(\GG(n,p)\text{ has property }\mathcal{P}).\hspace{3.2cm}\qed
\]
\ma
Typically then, we will show, for any $p$, that a property holds in $\GG(n,p)$ with probability $1-o(n^{-3})$. By Lemma \ref{switch} then, it holds in $\GG_{n,pN}$ with probability $1-o(n^{-2})$. As, in the $n$-vertex random graph process $\{G_M\}_{M\geq 0}$, $G_M$ is distributed as $G_{n,M}$, the property therefore holds throughout almost every random graph process.

\subsection{Properties of the binomial random graph}
The probabilistic results we need for Theorem~\ref{newtheorem} follow simply from Chernoff's inequality (see, for example, Janson, \L uczak and Ruci\'nski~\cite[Corollary 2.3]{jlr11}).
\lem\label{chernoff} If $X$ is a binomial variable with standard parameters~$n$ and $p$, denoted $X=\mathrm{Bin}(n,p)$, and $\e$ satisfies $0<\e\leq 3/2$, then
\[
\hspace{4.8cm}\P(|X-\E X|\geq \e \E X)\leq 2\exp\left(-\e^2\E X/3\right).\hspace{4.1cm}\qed
\]
\ma
When $A$ and $B$ are vertex sets and $G=G(n,p)$, the parameter $e_G(A,B)$ is close to being binomially distributed, and its typical value can be bounded using the following simple proposition.
\begin{prop}\label{Chern} Let $n\in\N$ and $0\leq p\leq 1$. Suppose $X_1,\ldots,X_n$ are independent random variables, each equal to 1 with probability $p$, and 0 otherwise. Suppose $\d_i\in\{1,2\}$, $i\in[n]$, and $X=\sum_{i}\d_iX_i$. Then, for each $0<\e<1$,  we have
\begin{equation}\label{thingy}
\P(|X-\E X|\geq \e\E X)\leq 4\exp(-\e^2\E X/9).
\end{equation}
\end{prop}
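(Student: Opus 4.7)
The plan is to decompose $X$ into two independent binomial pieces, one for each weight value, and apply Lemma~\ref{chernoff} to each. Let $S_j = \{i \in [n] : \d_i = j\}$ for $j \in \{1, 2\}$, and set $Y = \sum_{i \in S_1} X_i$ and $Z = \sum_{i \in S_2} X_i$, so that $Y \sim \mathrm{Bin}(|S_1|, p)$ and $Z \sim \mathrm{Bin}(|S_2|, p)$ are independent and $X = Y + 2Z$. Write $a = \E Y$, $b = \E Z$, and $\mu = \E X = a + 2b$. By the triangle inequality $|X - \mu| \leq |Y - a| + 2|Z - b|$, so for any $\e_1, \e_2 \geq 0$ with $\e_1 a + 2\e_2 b \leq \e\mu$, the event $\{|X - \mu| \geq \e\mu\}$ is contained in $\{|Y - a| \geq \e_1 a\} \cup \{|Z - b| \geq \e_2 b\}$.

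To make the two Chernoff exponents $\e_1^2 a/3$ and $\e_2^2 b/3$ both equal $\e^2 \mu/9$, so that each application of Lemma~\ref{chernoff} yields $2\exp(-\e^2\mu/9)$, I would pick $\e_1 = \e\sqrt{\mu/(3a)}$ and $\e_2 = \e\sqrt{\mu/(3b)}$. The compatibility requirement $\e_1 a + 2\e_2 b \leq \e\mu$ becomes $\sqrt{\mu/3}(\sqrt a + 2\sqrt b) \leq \mu$, i.e.\ $(\sqrt a + 2\sqrt b)^2 \leq 3(a + 2b)$, which rearranges to $2\sqrt{ab} \leq a + b$ and is thus AM-GM. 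A union bound over the two Chernoff estimates then gives the claimed inequality $\P(|X - \mu| \geq \e\mu) \leq 4\exp(-\e^2\mu/9)$.

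The main obstacle to this clean argument is the hypothesis $\e_j \leq 3/2$ in Lemma~\ref{chernoff}, which can fail when $a$ or $b$ is small relative to $\mu$. The degenerate cases $a = 0$ or $b = 0$ are immediate, since the corresponding summand is deterministic and Chernoff need only be applied to the other component, which is a single binomial with mean $\mu$ or $\mu/2$. In the remaining imbalanced regime I would sidestep the decomposition and run Markov's inequality directly on the moment generating function: using independence and $1 + u \leq e^u$, we have $\E[e^{tX}] \leq \exp(a(e^t - 1) + b(e^{2t} - 1))$, and using $e^u \leq 1 + u + u^2$ for $|u| \leq 1$ together with $a + 4b \leq 2\mu$, this is $\leq \exp(\mu t + 2\mu t^2)$ for $|t| \leq 1/2$. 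Markov's inequality at the optimal choice $t = \pm\e/4$ then delivers both tails and yields the bound $2\exp(-\e^2 \mu/8)$, which comfortably implies the stated claim with room to spare.
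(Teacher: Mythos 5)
Your argument is correct, but it is genuinely different from the paper's. The paper does not split $X$ by weight class; instead it writes each $\d_i$ as $\d_{i,1}+\d_{i,2}$ with $\d_{i,j}\in\{0,1\}$, choosing the split so that the two resulting binomials $Y_1=\sum_i\d_{i,1}X_i$ and $Y_2=\sum_i\d_{i,2}X_i$ have means within $p$ of each other. After discarding the trivial case $\E X\leq 9p$ (where the right-hand side of \eqref{thingy} exceeds $1$), this balancing gives $\E Y_1,\E Y_2\geq \E X/3$, so applying Lemma~\ref{chernoff} to each $Y_j$ with the \emph{same} relative deviation $\e$ and union-bounding (valid since $X=Y_1+Y_2$ and $\E X=\E Y_1+\E Y_2$; independence of $Y_1,Y_2$ is not needed) yields the $9=3\cdot 3$ in the exponent with no case analysis on the weights. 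Your decomposition $X=Y+2Z$ by weight class is the more obvious one, and your AM--GM calibration of $\e_1,\e_2$ is a nice way to equalize the two exponents, but as you correctly identify it collides with the hypothesis $\e\leq 3/2$ of Lemma~\ref{chernoff} when $a$ and $b$ are very imbalanced. Your MGF fallback is sound (the bound $\E[e^{tX}]\leq\exp(\mu t+2\mu t^2)$ for $|t|\leq 1/2$ and the choice $t=\pm\e/4$ check out, using $a+4b\leq 2\mu$), and in fact it proves the entire proposition on its own with the stronger constant $1/8$ --- which makes the decomposition half of your write-up redundant. If you keep the MGF route, lead with it; if you prefer the Chernoff-only route, the paper's balanced splitting of the weights is the cleaner way to avoid the imbalanced regime altogether.
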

\pr Note that, if $\E X\leq 9p$, then the right hand side of~\eqref{thingy} is larger than 1 and the result is trivial. Assume that $\E X\geq 9p$. For each $i\in[n]$, pick $\d_{i,1},\d_{i,2}\in\{0,1\}$ so that $\d_i=\d_{i,1}+\d_{i,2}$ and, if $Y_1=\sum_{i}\d_{i,1}X_{i}$ and $Y_2=\sum_{i}\d_{i,2}X_{i}$, then $|\E Y_1-\E Y_2|\leq p$. As $\E X\geq 9p$, we have $\E Y_1,\E Y_2\geq \E X/3$. By Lemma~\ref{chernoff}, $\P(|Y_1-\E Y_1|\geq \e\E Y_1)\leq 2\exp(-\e^2\E Y_1/3)\leq 2\exp(-\e^2\E X/9)$. A similar result holds for $Y_2$, so that
\[
\P(|X-\E X|\geq \e \E X)\leq \P(|Y_1-\E Y_1|\geq \e\E Y_1)+ \P(|Y_2-\E Y_2|\geq \e\E Y_2)\leq 4\exp(-\e^2\E X/9).\qedhere
\]
\oof
Using Proposition~\ref{Chern}, we can give a simple bound on the number of edges we can expect between any two large sets in $G(n,p)$, as follows.

\lem\label{AKS1equiv} Let $\e>0$ and $G=G(n,p)$. With probability $1-o(n^{-3})$, if $A, B\subset V(G)$ and $p|A||B|\geq 100n/\e^2$, then $(1-\e)p|A||B|\leq e_G(A,B)\leq (1+\e)p|A||B|$.
\ma
\pr Let $F=\{uv:u\in A,v\in B\}$, and, for each $e\in F$, let $X_e$ be the indicator variable for $\{e\in E(G)\}$. For each $uv\in F$, let $\delta_{uv}=2$ if $\{u,v\}\subset A\cap B$, and let $\delta_{uv}=1$ otherwise. Note that $e_G(A,B)=\sum_{e\in F}\delta_eX_e$. By Proposition \ref{Chern}, the property in the lemma does not hold then with probability at most
\[
\sum_{A,B\subset V(G),\,p|A||B|\geq 100n/\e^2}4\exp(-\e^2p|A||B|/9)\leq 2^{2n}\cdot 4\exp(-10n)=o(n^{-3}).\qedhere
\]
\oof
It will be useful to have a bound on the expected number of subgraphs of $G(n,p)$ with at most $\delta pn^2$ edges, for any small fixed $\delta$. For this, we will use the following proposition.
\begin{prop}\label{expectedH} For each $0<\d<1$, there exists $n_0$ such that, for each $n\geq n_0$ and $p\geq 1/n$,
\[
\sum_{H\subset K_n, e(H)\leq \delta pn^2}\P(H\subset G(n,p))\leq \exp(2\d\log(e/\delta)pn^2).
\]
\end{prop}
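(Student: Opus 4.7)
The plan is to compute the sum exactly as a partial binomial series and then apply a Chernoff-style reweighting to obtain the exponential bound. Since $\P(H \subset G(n,p)) = p^{e(H)}$ for any fixed graph $H$ on $[n]$, the left-hand side equals
\[
S := \sum_{m=0}^{\lfloor \delta pn^2 \rfloor} \binom{\binom{n}{2}}{m} p^m.
\]

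The main step is to introduce an auxiliary parameter $t \geq 1$ and use the inequality $t^m \leq t^{\delta pn^2}$, valid for $m \leq \delta pn^2$, to write $p^m = (p/t)^m t^m \leq (p/t)^m t^{\delta pn^2}$. Extending the resulting sum to all $m \leq \binom{n}{2}$ gives
\[
S \leq t^{\delta pn^2} \sum_{m=0}^{\binom{n}{2}} \binom{\binom{n}{2}}{m} (p/t)^m = t^{\delta pn^2}(1 + p/t)^{\binom{n}{2}} \leq \exp\bigl(\delta pn^2 \log t + pn^2/(2t)\bigr),
\]
using $\binom{n}{2} \leq n^2/2$ and $\log(1+x) \leq x$. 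Elementary calculus shows that the right-hand side, as a function of $t$, has unconstrained minimum at $t^\ast = 1/(2\delta)$, leading to a natural split at $\delta = 1/2$.

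When $\delta \leq 1/2$, the choice $t = 1/(2\delta) \geq 1$ is admissible and yields $S \leq \exp\bigl(\delta \log(e/(2\delta))\, pn^2\bigr)$. The target bound then follows from $\log(e/(2\delta)) \leq 2\log(e/\delta)$, which rearranges to $\delta \leq 2e$ and so is trivial. When $\delta > 1/2$, the constraint $t \geq 1$ binds; taking $t = 1$ gives the crude bound $S \leq (1+p)^{\binom{n}{2}} \leq \exp(pn^2/2)$, and it remains to check $\delta \log(e/\delta) \geq 1/4$ on $[1/2, 1)$. Writing $f(\delta) := \delta - \delta \log \delta$, one has $f'(\delta) = -\log\delta \geq 0$ on $(0,1]$, so $f$ is increasing and bounded below on $[1/2,1)$ by $f(1/2) = (1+\log 2)/2 > 1/4$.

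There is no real obstacle here: the argument uses only independence of edges in $G(n,p)$ and the standard moment-generating-function trick. The factor of $2$ in the target exponent $2\delta \log(e/\delta)$ is in fact quite generous—the same method delivers the sharper bound $\exp\bigl(\delta \log(e/(2\delta))\, pn^2\bigr)$ in the main regime $\delta \leq 1/2$—and this slack easily absorbs both the crude comparison $\log(e/(2\delta)) \leq 2\log(e/\delta)$ and the use of a nonoptimal $t = 1$ in the $\delta > 1/2$ range.
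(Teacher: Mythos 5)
Your proof is correct, but it takes a genuinely different route from the paper's. The paper bounds the partial binomial sum term by term: it majorizes the number of graphs with $i$ edges by $\binom{n^2}{i}\leq (en^2/i)^i$, observes that $(epn^2/i)^i$ is increasing in $i$ on the range $i\leq \delta pn^2<pn^2$, and so bounds the whole sum by (number of terms)$\cdot(e/\delta)^{\delta pn^2}\leq n^2(e/\delta)^{\delta pn^2}$; the polynomial prefactor $n^2$ is then absorbed into the exponent using $pn^2\geq n$ and $n\geq n_0$. You instead run the exponential-moment (Chernoff tilting) argument: insert $t^m\leq t^{\delta pn^2}$, complete the binomial sum to $(1+p/t)^{\binom{n}{2}}$, and optimize over $t\geq 1$, splitting at $\delta=1/2$ where the constraint $t\geq 1$ binds. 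Both are standard ways to control a partial binomial sum, and all of your individual steps check out (the optimum $t^\ast=1/(2\delta)$, the value $\delta\log(e/(2\delta))pn^2$ at the optimum, the comparison $\log(e/(2\delta))\leq 2\log(e/\delta)\iff\delta\leq 2e$, and $\delta\log(e/\delta)\geq f(1/2)=(1+\log 2)/2>1/4$ on $[1/2,1)$). What your approach buys is a cleaner and slightly stronger statement: there is no polynomial prefactor to absorb, so you never need $p\geq 1/n$ or $n\geq n_0$, and in the regime $\delta\leq 1/2$ you get the sharper exponent $\delta\log(e/(2\delta))pn^2$. What the paper's approach buys is brevity. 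One small point of interpretation: you assert the left-hand side \emph{equals} $\sum_m\binom{\binom{n}{2}}{m}p^m$, which implicitly identifies a subgraph of $K_n$ with its edge set; this is consistent with the paper's own count (it uses $\binom{n^2}{i}$, an edge-set count, with no factor for choosing a vertex set) and with how the proposition is applied (to connected expanders, which are determined by their edges), so it is not a gap, but "is at most" would be the safer phrasing.
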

\begin{proof} We have
\begin{align*}
\sum_{H\subset K_n, e(H)\leq \delta pn^2}\P(H\subset G(n,p))&\leq \sum_{i=0}^{\d pn^2}\binom{n^2}{i}p^i
\leq \sum_{i=0}^{\d pn^2}\left(\frac{epn^2}{i}\right)^i
\leq n^2\cdot \left(e/\delta\right)^{\d pn^2} \\
&\leq \exp(2\d\log(e/\delta)pn^2),
\end{align*}
where the last inequalities hold for sufficiently large $n\geq n_0$.
\end{proof}


\section{Proof of Theorem~\ref{newtheorem}}\label{secnewthm}
We begin this section by reintroducing P\'osa rotation and giving a sketch of our proof of Theorem~\ref{newtheorem}. In Sections~\ref{secrotatemany} and~\ref{secextend} we prove two results, Lemma~\ref{rotateresil} and Lemma~\ref{extend}, which allow us to prove Theorem~\ref{newtheorem} in Section~\ref{secnewthmproof}.

\subsection{P\'osa rotation and proof sketch}~\label{secsketch}
The rotation-extension technique was first introduced by P\'osa~\cite{posa76} to study the threshold for Hamiltonicity in the random graph.
Given a path $x_1x_2\ldots x_k$ and an edge $x_kx_j$, for some $j<k-1$, a \emph{rotation with $x_1$ fixed} is made by \emph{breaking} the edge $x_jx_{j+1}$ and considering the new path $x_1\ldots x_jx_kx_{k-1}\ldots x_{j+1}$. Thus, we find a new path with the same vertex set but which starts at $x_1$ and ends at $x_{j+1}\neq x_k$. If we are able to rotate both ends of the path multiple times, then we can find many pairs of vertices, such that, if any one of the pairs is an edge, then there is a cycle with the same vertex set as the path. 

As shown by P\'osa~\cite{posa76}, if a graph $H$ is a 2-expander, then we can rotate a maximal length path in $H$ many times to find other maximal length paths with a different endvertex, to get the following lemma.

\begin{lemma}\label{posalemma}
If $H$ is a $2$-expander and $U\subset V(H)$ supports a maximal length path in $H$ with endvertex $v\in U$, then there are at least $|H|/8$ vertices $u\in U$ for which there is a $u,v$-Hamilton path in $H[U]$.\hfill\qed 
\end{lemma}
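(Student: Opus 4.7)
The plan is a direct application of P\'osa's rotation-extension technique to the given maximal path. Let $P_0$ be the maximal-length path of $H$ with $V(P_0)=U$ and $v$ as one endpoint, writing $P_0=v u_2 u_3\cdots u_k$ with $k=|U|$. A \emph{rotation with $v$ fixed} based on an edge $u_k u_j\in E(H)$ with $j<k-1$ replaces $P_0$ by the path $v u_2\cdots u_j u_k u_{k-1}\cdots u_{j+1}$, which still spans $U$ and still has $v$ at one end, but now terminates at $u_{j+1}$. I would let $S\subseteq U$ be the set of all non-$v$ endpoints of paths obtainable from $P_0$ by iterating such rotations. Each $u\in S$ then yields a $u,v$-Hamilton path in $H[U]$, so it suffices to show $|S|\geq |H|/8$.

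Two standard observations drive the argument. First, since every rotation preserves both the length and the vertex set of the path, every reachable path is itself a maximum-length path in $H$; consequently, for each $u\in S$, every $H$-neighbour of $u$ lies in $U$, as otherwise we could extend the path and contradict its maximality in $H$. Second, the classical P\'osa counting: for $u\in S$ and any $w\in N_H(u)\setminus(S\cup\{v\})$, take a rotated path $P_u$ ending at $u$ and rotate it at the edge $uw$; this replaces the endpoint by a path-neighbour of $w$ in $P_u$, which must therefore also belong to $S$. Organising this correspondence across all reachable paths yields, after a careful accounting in which each vertex of $S$ is charged at most twice, an upper bound on $|N_H(S)|$ that nearly matches the lower bound $2|S|$ supplied by the 2-expander hypothesis.

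To conclude I would argue by contradiction: suppose $|S|<|H|/8$. The 2-expander hypothesis applied to $S$ gives $|N_H(S)|\geq 2|S|$, while the sharp form of the P\'osa bound delivers a strict matching upper bound once the boundary contributions of the fixed vertex $v$ and the original endpoint $u_k$ are absorbed, forcing $|S|\geq |H|/8$ as required. The main obstacle in executing this plan is precisely this last bookkeeping step: one must select, for each $w\in N_H(S)\setminus S$, a specific rotated path and a specific path-neighbour of $w$ lying in $S$ so that each vertex of $S$ is charged at most twice, and verify that the two boundary vertices $v$ and $u_k$ fit within the $|H|/8$ threshold without slackening the bound. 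This is exactly the content of P\'osa's original counting argument in~\cite{posa76}, and once it is in place the rest of the proof is automatic from the definitions.
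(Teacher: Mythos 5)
The paper offers no proof of this lemma, presenting it as a direct consequence of P\'osa's rotation--extension technique, and your sketch reconstructs exactly that standard argument: let $S$ be the set of non-$v$ endpoints reachable by rotations with $v$ fixed (each of which witnesses a $u,v$-Hamilton path of $H[U]$), bound $|N_H(S)|$ from above by P\'osa's counting and from below by the $2$-expander property, and conclude $|S|>|H|/8$. The one point to watch when carrying out the bookkeeping you defer to~\cite{posa76} is that each $w\in N_H(S)$ must be charged to a neighbour of $w$ on the \emph{original} path $P_0$ that lies in $S$, not to a neighbour on the varying rotated paths $P_u$ (under the latter charging a single vertex of $S$ could absorb arbitrarily many charges); done correctly this gives $N_H(S)\subseteq S^-\cup S^+$ along $P_0$, and since the original non-fixed endpoint belongs to $S$ but has no successor on $P_0$, one obtains the strict bound $|N_H(S)|\le 2|S|-1<2|S|$ needed for the contradiction.
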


If any of the vertex pairs $u,v\in U$ in Lemma~\ref{posalemma} are added to $E(H)$ then $H$ contains a cycle with length $|U|$. Furthermore, if $H$ is connected and $|U|<|H|$, by considering a neighbour of this cycle we can find a path with $|U|+1$ vertices. We will start with a sparse 2-expander in our random graph and add edges which increase the maximum length of a path in the subgraph, or make it Hamiltonian. To describe this, we use the following definition.

\de In a graph $H$, we say $E\subset \binom{V(H)}{2}$ is a \emph{booster for $H$} if $H+E$ contains a longer path than $H$ does, or $H+E$ is Hamiltonian. If $e\in \binom{V(H)}{2}$, and $\{e\}$ is a booster for $H$, then we also say that $e$ is a booster for $H$.
\fn

Note that, if we iteratively add $|H|$ boosters to $H$, then, as the length of the maximum path is at most $|H|-1$, the resulting graph must be Hamiltonian.

A standard method to find a Hamilton cycle in $G(n,(\log n+\log\log n+\omega(1))/n)$ runs as follows. Letting $G_0=G(n,(\log n+\log\log n+\omega(1))/n)$, we can easily show that $G_0$ is almost surely a 2-expander. Therefore, from Lemma~\ref{posalemma} applied twice to a maximal length path in $G_0$, $\binom{V(G_0)}{2}$ contains at least $n^2/128$ boosters for $G_0$. Revealing more edges with probability $10^3/n^2$ (say) to get $G_1$, with probability at least $1/2$, $G_0$ has some booster in $G_1$. Repeating this $k=\omega(n)$ times to get $G_0\supset G_1\supset \ldots \supset G_k$, we almost surely have at least $n$ values for $i$ for which $G_i-G_{i-1}$ contains a booster for $G_{i-1}$, and hence $G_k$ is Hamiltonian. Note that in total each edge has been revealed with probability $(\log n+\log\log n+\omega(1))/n$. This method is known as \emph{sprinkling}, but cannot withstand the later removal of edges. We need therefore new methods to show that random graphs are \emph{resiliently} Hamiltonian.


From the work by Lee and Sudakov~\cite{LS12} on the resilience of Hamiltonicity in $G(n,\omega(\log n/n))$, we can learn the following principle:  almost surely, if we have any sparse subgraph $H_0\subset G=G(n,\omega(1/n))$ and desire one of $\Omega(n^2)$ possible edges to exist in $G-H_0$, then some such edge does exist in $G-H_0$. In this, the desired edges in $G-H_0$ can be determined by $H_0$. The principle comes, as for each possible sparse subgraph $H_0$, it is far more likely that at least one of the $\Omega(n^2)$ desired edges exists in $G-H_0$ than that $H_0\subset G$, so much so that this can overpower the possible number of sparse subgraphs $H_0$. A calculation along this line appears after Claim~\ref{claimf} in the proof of Lemma~\ref{rotatemany}.

From the work by Lee and Sudakov~\cite{LS12}, with only slight modification we have the following method to find a Hamilton cycle in $G(n,\omega(\log n/n))$. It is easy to show that $G(n,2\log n/n)$ is almost surely a 2-expander, and therefore $G=G(n,\omega(\log n/n))$ almost surely contains a sparse $2$-expander $H_0$. From the principle above and Lemma~\ref{posalemma}, we can show that, almost surely, for any sparse 2-expander $H\subset G$, $G-H$ contains a booster edge for $H$. Starting then, with $H_0$, we can iteratively add booster edges to $H_0$ until it is Hamiltonian, where the graph remains relatively sparse as at most $n$ booster edges are added. Importantly, this basic technique can be made to withstand the later removal of edges. As Lee and Sudakov showed, a careful analysis reveals that, for sparse 2-expanders $H_0$, the possibilities for booster edges in $G-H_0$ are so numerous that almost surely some must lie in $H-H_0$ for any $(1/2+\e)$-residual subgraph $H\subset G$. Furthermore, almost surely, any such subgraph $H\subset G$ can be shown to contain a spanning sparse 2-expander, so that this argument can be used to show that $G(n,\omega(\log n/n))$ is almost surely $(1/2-\e)$-resiliently Hamiltonian.

We have already made a small conceptual change to Lee and Sudakov's argument which helps us use a lower edge probability (namely, that they consider all maximal paths within such a subgraph $H_0$, rather than the subgraph alone).
The main novelty, however, in our methods is how we find booster sets. Previous work in this area, in particular that by Lee and Sudakov, used only booster sets consisting of a single edge. We will use instead booster sets with 2 edges, using an application of the principle outlined above to find each of the 2 edges. This works as follows.

Using the notation above, we first show, with a simple application of P\'osa rotation, that there are many edges $e\notin E(H_0)$ on the vertex set $V(H_0)$ for which $H_0+e$ has many boosters. Using the principle outlined above, as $H_0$ is sparse, it is very likely many of these edges will appear in $G-H_0$ -- so many that plenty must lie in any $(1/2+\e)$-residual subgraph $H\subset G$. In fact, in this case, we can show that there is a subgraph $H_1\subset H-H_0$ which is as sparse as $H_0$ yet contains many of these edges.

In summary, we get a sparse subgraph $H_1$ for which there are many edges $f$ for which there is some $e\in E(H_1)$ so that $\{e,f\}$ is a booster for $H_0$ (see Lemma~\ref{rotateresil}). Applying the above principle again, we can show that is it very likely that many such $f$ lie in $G$ -- so many that plenty must again lie in the $(1/2+\e)$-residual subgraph $H$. This allows us to find a booster set $\{e,f\}\subset H-H_0$. Applying this iteratively, we add 2-edge booster sets from $H-H_0$ to $H_0$ until it becomes Hamiltonian.

We emphasise two points here. First, it is important that we do not add all of $H_1$ to $H_0$, but only one edge, else the density of $H_1$ will grow far too quickly. Secondly, it is important that $H_1$ is also sparse to allow us to apply the principle above again.

In each iteration of the above outline, we use only two rotations using P\'osa's method: one rotation in $H_0$ and then one rotation using an edge of $H_1$. In contrast, in~\cite{LS12}, many rotations were needed to show that there were enough single edge boosters for $H_0$ that one will resiliently exist in $G-H_0$. This analysis crucially uses that vertices there will have degree $\omega(\log n/\log\log n)$, so that an edge probability $p=\omega(\log n/\log \log n)$ in $G(n,p)$ is required. A different analysis might work with a lower probability bound, but this appears to be much more difficult than using only two rotations.

We will show such a sparse subgraph $H_1$ exists in Section~\ref{secrotatemany} (finding many candidates for the first edge of the booster), before showing in Section~\ref{secextend} that, for some edge in $H_1$, there is another edge in $H-H_0-H_1$ to forma booster with it (finding the second edge for the booster). Theorem~\ref{newtheorem} is then proved in Section~\ref{secnewthmproof}.

\subsection{Resilient rotation}\label{secrotatemany}

We will work within a random graph $G=G(n,p)$ with a sparse 2-expander $H_0\subset G$ and a $(1/2+\e)$-residual subgraph $H\subset G$ with $V(H)=V(H_0)$. We aim to find another sparse subgraph $H_1\subset G-H_0$, so that, for many edges $e\in E(H_1)$, $H_0+e$ is rich in boosters for $H_0$. This is made precise, as follows.

\de\label{emany}
 Given two graphs $H_0$ and $H_1$ which are edge disjoint but have the same vertex set, we say $H_0$ has \emph{$\e$-many boosters with help from $H_1$} if there are at least $\e |H_0|$ vertices $v\in V(H_0)$ for which there are at least $(1/2+\e)|H_0|$ many vertices $u\in V(H_0)\setminus \{v\}$ for which there exists an $e\in E(H_0)\cup E(H_1)$ so that $\{uv,e\}$ is a booster for $H_0$.
\fn
Our key result in this section is then the following.

\begin{lemma}\label{rotateresil}
For each $0<\e\leq 1$ there exists $\d,C>0$ such that, if $p\geq C/n$, then $G=G(n,p)$ has the following property with probability $1-o(n^{-3})$. Let $H_0\subset G$ be a $2$-expander with at least $\e n$ vertices and at most $2\delta pn^2$ edges and let $H$ be a $(1/2+\e)$-residual subgraph of $G$ with $V(H)=V(H_0)$. Then, there is some subgraph $H_1\subset H-H_0$ with $e(H_1)\leq 2\delta pn^2$ so that $H_0$ has $(\e/16)$-many boosters with help from $H_1$.
\end{lemma}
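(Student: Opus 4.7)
The plan is to identify, for each fixed $H_0$, a large set $F=F(H_0)\subseteq \binom{V(H_0)}{2}$ of ``potentially useful'' candidate edges for $H_1$, and then to extract a sparse $H_1$ from $F\cap E(H-H_0)$ using concentration in $G=G(n,p)$ combined with the residuality of $H$. For the structural part, I fix a maximum-length path $P_0$ in $H_0$ with endpoint $v_0$, apply Lemma~\ref{posalemma} with $v_0$ fixed to get a set $T\subseteq V(H_0)$ of second endpoints with $|T|\geq |H_0|/8$, and then, for each $v\in T$, apply Lemma~\ref{posalemma} again (with $v$ fixed) to produce $U_v$ with $|U_v|\geq |H_0|/8$ together with a maximum-length $H_0$-path $P_{v,u'}$ from $v$ to each $u'\in U_v$. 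The key observation (``one extra P\'osa rotation with help of an added edge''): for each $u'\in U_v$ and each interior vertex $w$ of $P_{v,u'}$, the edge $f=u'w$ (assuming $f\notin E(H_0)$) enables a further rotation in $H_0+f$ to produce a maximum-length $(H_0+f)$-path from $v$ to a new endpoint $u$ (the neighbour of $w$ on $P_{v,u'}$ towards $u'$). Adding edge $uv$ then closes a cycle on the path-vertices which, because $H_0$ is connected (being a 2-expander), is either Hamilton or extends to a longer path, so $\{uv,f\}$ is a booster for $H_0$. I set $F_v:=\{u'w:u'\in U_v,\ w\in \mathrm{Int}(P_{v,u'})\}\setminus E(H_0)$ and $F:=\bigcup_{v\in T}F_v$; since $|U_v|\geq |H_0|/8$ and maximum paths in a 2-expander span $\Omega(|H_0|)$ vertices, each $|F_v|$ is $\Omega(|H_0|^2)$.

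For the probabilistic part, I fix such an $H_0$ and use concentration to control $F\cap E(H-H_0)$. Lemma~\ref{AKS1equiv} and Proposition~\ref{Chern} give, outside a failure event of probability $\exp(-\Omega(pn^2))$, that $|F_v\cap E(G)|$ is very close to $p|F_v|$ for every $v\in T$. A per-vertex degree argument, exploiting $d_H(x)\geq (1/2+\e)d_{G[V(H_0)]}(x)$ at every $x$, then forces $|F_v\cap E(H)|\geq (1/2+\e/2)p|F_v|$ for each $v\in T$, up to a negligible correction of order $e(H_0)=O(\delta pn^2)$ from edges one must discard to land in $H-H_0$. I then select $H_1\subseteq F\cap E(H-H_0)$ with $e(H_1)\leq 2\delta pn^2$ -- either greedily to balance contributions across $v\in T$, or uniformly at random, followed by a Chernoff concentration on each $|H_1\cap F_v|$ -- so that, for at least $(\e/16)|H_0|$ vertices $v\in T$, the edges of $H_1\cap F_v$ supply at least $(3/8+\e/16)|H_0|$ new booster partners $u\notin U_v$. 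Combined with the $|U_v|\geq |H_0|/8$ ``automatic'' partners from $e\in E(H_0)$ alone (the single-edge-booster case), this gives $(1/2+\e/16)|H_0|$ booster partners of $v$, i.e., the required $(\e/16)$-many-boosters property.

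Finally, the argument is made uniform over choices of $H_0$ by a union bound via Proposition~\ref{expectedH}: the expected number of sparse 2-expander candidates $H_0\subseteq G$ with at most $2\delta pn^2$ edges is at most $\exp(2\delta\log(e/\delta)pn^2)$. Choosing $\delta=\delta(\e)$ small enough that this is dominated by the per-$H_0$ failure probability $\exp(-\Omega(pn^2))$ from the previous paragraph yields the desired $1-o(n^{-3})$ tail.

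The main obstacle lies in the middle paragraph, in converting residuality into a lower bound on $|F_v\cap E(H-H_0)|$. The $F_v$-edges need not be incident to $v$ -- they link $U_v$-vertices to interior vertices of path segments -- so the pointwise residuality constraint on $H$ must be combined with averaging over vertices to show that an adversarial $H$ cannot remove too large a fraction of $F_v\cap E(G)$ for too many $v\in T$ simultaneously. Moreover, the $F_v$ overlap substantially (the same edge $u'w$ may lie in $F_v$ for several choices of $v$ via different witness paths), so ensuring that the chosen $H_1$ simultaneously provides $\Omega(|H_0|)$ edges in $F_v$ for a linear fraction of $v\in T$, while respecting the global budget $e(H_1)\leq 2\delta pn^2$, is the technically delicate step and will require a careful bipartite-incidence / random-selection analysis.
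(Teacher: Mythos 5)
Your overall architecture matches the paper's: a double rotation to generate candidate ``helper'' edges incident to rotated endpoints, a union bound over sparse $2$-expanders $H_0$ weighted by $\P(H_0\subseteq G)$ via Proposition~\ref{expectedH}, and a final random sparsification of the surviving helper edges to produce $H_1$ (this last step is exactly how the paper deduces Lemma~\ref{rotateresil} from its intermediate Lemma~\ref{rotatemany}). However, the step you yourself flag as ``the technically delicate step'' is a genuine gap, and your specific construction of $F_v$ makes it unfixable as stated. You take $F_v=\{u'w: u'\in U_v,\ w\in\mathrm{Int}(P_{v,u'})\}$, so every target $u$ produced by the extra rotation lies on the longest path $P$, and the booster partners of $v$ you can ever certify are contained in $V(P)$ (your ``automatic'' partners $U_v$ also lie on $P$). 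But a longest path in a $2$-expander is only guaranteed to cover a constant fraction of $V(H_0)$ (roughly $3|H_0|/8$ from the expansion condition), which can be well below the required $(1/2+\e/16)|H_0|$; in that regime your scheme cannot reach the target count no matter how the probabilistic part goes. The same shortfall breaks the residuality argument: the helper edges at a fixed $u'\in U_v$ all go into $V(P)$, a set of size possibly below $|H_0|/2$, so an adversary with deletion budget $(1/2-\e)d_{G[V(H_0)]}(u')$ at each $u'$ can remove \emph{all} of $F_v\cap E(G)$ while keeping $H$ residual.

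The paper's resolution is to make the helper-pair family at the rotated endpoints essentially \emph{complete}: for each off-path vertex $u$, the pair $ua$ (with $a$ a rotated endpoint) is itself a single-edge booster because it extends the maximal path, and these pairs are added to the family. The union of helper pairs then has size $|A||H_0|-2|A|^2$, i.e.\ almost all pairs incident to the set $A$ of rotated endpoints, so (i) the achievable targets cover essentially all of $V(H_0)$, and (ii) summing the per-vertex deletion budget over $A$ shows the adversary can destroy only a $(1/2-\e)(1+o(1))$ fraction of them. The concentration event corresponding to your ``averaging over vertices'' worry is then a single union bound ($F_3$ in the paper) over all candidate small target sets $U$, showing the surviving helper edges cannot concentrate on fewer than $(1/2+\e/8)|H_0|$ targets. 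Your concern about the sets $F_v$ overlapping across different $v$ is a non-issue: the analysis is carried out separately for each $v$ (with its own failure event), and only the disjointness of the edge sets $E_{v,u}$ over $u$ for fixed $v$ is needed for the final random choice of $H_1$. To repair your proof, augment $F_v$ with all pairs $u'z$, $u'\in U_v$, $z\notin V(P_{v,u'})$, keep $|U_v|$ small (the paper uses $\e|H_0|/30$ so that the $O(|U_v|^2)$ loss from targets landing back in $U_v$ is negligible), and then run the budget-versus-count comparison at the vertices of $U_v$.
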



In order to show that, almost surely, the subgraph $H_1$ in Lemma~\ref{rotateresil} exists, we will first show that there are a linear number of vertices $v$ in $H_0$ for which there are $(1/2+\e)|H|$ vertices $u\in V(H_0)$ for which we can find a set $E_{v,u}$ such that, for each $e\in E_{v,u}$, $uv$ is a booster for $H_0+e$ (see Lemma~\ref{rotatemany}). Taking a random sparse subgraph $H_1\subset G-H$ typically retains some edge in enough of the sets $E_{v,u}$ to allow us then to show that such a graph $H_1$ is very likely to satisfy Lemma~\ref{rotateresil}.

\begin{lemma}\label{rotatemany}
For each $0<\e\leq 1$ there exists $\d,C>0$ such that, if $p\geq C/n$, then $G=G(n,p)$ has the following property with probability $1-o(n^{-3})$. Let $H_0\subset G$ be a $2$-expander with at least $\e n$ vertices and at most $2\delta pn^2$ edges and let $H$ be a $(1/2+\e)$-residual subgraph of $G$ with $V(H)=V(H_0)$. Then, for at least $|H|/8$ vertices $v\in V(H)$ there is some set $U_v\subset V(H)$ with $|U_v|\geq (1/2+\e/8)|H|$ and disjoint subsets $E_{v,u}\subset E(H-H_0)$, $u\in U_v$, so that $|E_{v,u}|\geq 50/\e\delta$ and, for each $u\in U_v$ and $e\in E_{v,u}$, $\{uv,e\}$ is a booster for $H_0$.
\end{lemma}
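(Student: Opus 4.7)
The plan is to apply P\'osa rotations (Lemma~\ref{posalemma}) twice inside $H_0$ and then once more using an edge of $H - H_0$, producing a booster pair for each such edge. The candidate $v$'s come from the first rotation: fix a maximum-length path $P_0$ in $H_0$ with endpoint $a_0$ and let $U = V(P_0)$; Lemma~\ref{posalemma} applied with $a_0$ fixed yields a set $V^{\ast}$ of at least $|H|/8$ vertices $v$, each serving as the other endpoint of a maximum $a_0$-ending path in $H_0$ on $U$. For each $v \in V^{\ast}$, apply Lemma~\ref{posalemma} again with $v$ fixed to obtain a set $S_v$ of at least $|H|/8$ intermediate endpoints $u_1 \in U$, each equipped with a chosen maximum $v, u_1$-path $P_{v, u_1}$ in $H_0$ spanning $U$.

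The third rotation uses an edge of $H - H_0$. For $u_1 \in S_v$ and any edge $e = u_1 x_j \in E(H - H_0)$ with $x_j$ lying strictly inside $P_{v, u_1}$ (that is, not adjacent to $u_1$ along the path), rotating produces a maximum $v, u_2$-path in $H_0 + e$, where $u_2$ is the successor of $x_j$ on $P_{v, u_1}$. Adding also $u_2 v$ closes this new path into a cycle on $U$: if $U = V(H_0)$ this is a Hamilton cycle of $H_0$, and otherwise the connectivity of $H_0$ supplies an edge from $U$ to a vertex outside $U$, which extends the cycle into a path on $|U|+1$ vertices, longer than $P_0$. Either way, $\{u_2 v, e\}$ is a booster for $H_0$. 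Assigning each such edge $e$ to the unique $u_2$ it produces (with an arbitrary tiebreak if both endpoints of $e$ lie in $S_v$) guarantees disjointness of the sets $E_{v, u_2}$.

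The main work is counting. Standard random-graph facts, via Chernoff (Proposition~\ref{Chern}) and Lemma~\ref{AKS1equiv}, allow me to assume that every vertex has $G$-degree close to $pn$ and, together with $e(H_0) \leq 2\delta pn^2$, that all but $o(|H|)$ vertices $u_1$ satisfy $d_{H_0}(u_1) \leq (\e/100)p|H|$ and hence $d_{H - H_0}(u_1, U) \geq (1/2 + \e/2)p|H|$ from the residual property. Each such good $u_1 \in S_v$ therefore contributes at least $(1/2 + \e/2)p|H|$ rotation edges. To extract $U_v$, note that for each candidate $u_2$, $|E_{v, u_2}|$ counts those good $u_1 \in S_v$ with $u_1 \pi_{u_1}(u_2) \in E(H - H_0)$, where $\pi_{u_1}$ is the predecessor bijection along $P_{v, u_1}$; this is a sum of essentially-independent Bernoulli events with mean of order $(1/2 + \e/2)p|S_v|$, made much larger than $50/(\e\delta)$ by choosing $C = C(\e,\delta)$ large. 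The main obstacle is converting this aggregate lower bound into the pointwise statement that $|E_{v, u_2}| \geq 50/(\e\delta)$ for a $(1/2 + \e/8)$-fraction of $u_2$'s; I plan to do this by coupling the double-counting lower bound $\sum_{u_2} |E_{v, u_2}| \geq |S_v|(1/2 + \e/2)p|H|$ (from summing over $u_1$ instead of $u_2$) with a near-matching Chernoff-type upper bound on $\max_{u_2} |E_{v, u_2}|$ obtained by treating the $|S_v|$ potential vertex pairs for fixed $u_2$ as an independent family, a discrepancy which forces the required number of well-covered $u_2$'s. Union-bounding the failure probability over $v, u_2$, over sparse choices of $H_0$ (through Proposition~\ref{expectedH}), and over residual $H \subset G$ then yields the claimed $1 - o(n^{-3})$ overall bound.
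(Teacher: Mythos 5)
Your rotation scheme is essentially the paper's: fix $v$ as an endpoint of a longest path, rotate with $v$ fixed to get a set of alternative endpoints ($S_v$ in your notation, $A$ in the paper's), perform one further rotation with a single new edge $u_1x$, and close with $u_2v$, so that $\{u_1x,\,u_2v\}$ is the booster pair. The gaps are all in the probabilistic plan, and they are serious at the density $p=\Theta(1/n)$. First, in $G(n,C/n)$ individual degrees are Poisson-like, not concentrated: a constant (in $n$) fraction of vertices have degree well below $pn$, so ``every vertex has $G$-degree close to $pn$'' is false, and the pointwise bound $d_{H-H_0}(u_1,U)\geq (1/2+\e/2)p|H|$ cannot be established vertex by vertex. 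The paper never needs it; it only ever controls the aggregate $\sum_{a\in A}d_{G[V(H_0)]-H_0}(a)$ over the linear-sized anchor set (event $F_2$). Second, and most importantly, your route from the double-count to the pointwise claim via an upper bound on $\max_{u_2}|E_{v,u_2}|$ cannot work: for fixed $u_2$ the relevant count is a sum of $|S_v|=\Theta(n)$ indicators of probability $\Theta(1/n)$, so it has constant mean $\Theta(C)$, its upper tail is only $e^{-O(C)}$, and this is hopeless against the union bound over the $\exp(\Theta(\d\log(1/\d)pn^2))$ sparse choices of $H_0$ (it does not even give uniformity over the $n$ choices of $u_2$); indeed the maximum is genuinely of order $\log n/\log\log n$, far above the mean. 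What does work --- and is exactly the paper's event $F_3$ --- is to upper-bound $\bigl|\bigcup_{u\in T}(Y_u\cap E(G))\bigr|$ for \emph{every} candidate set $T$ with $|T|\leq(1/2+\e/8)|H|$: this is a quantity of order $p|A||H_0|=\Theta(\e^3pn^2)$, whose deviation probability $e^{-\Omega(\e^5pn^2)}$ beats both the $2^n$ choices of $T$ and the weighted count of sparse $H_0$'s.

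Two further points. You propose to union-bound ``over residual $H\subset G$''; there are up to $2^{e(G)}=e^{\Theta(pn^2)}$ such subgraphs, which failure probabilities of the form $e^{-\e^5pn^2/10^5}$ cannot absorb. The paper instead formulates events $F_1$--$F_3$ about $G$ alone which \emph{deterministically} imply the conclusion for every residual $H$ simultaneously, using that the adversary's deletions incident to the anchor set are at most $\sum_{a}(1/2-\e)d_{G[V(H_0)]}(a)$, which $F_2$ caps at roughly $(1/2-\e)p|A||H_0|$ against a total of roughly $p|A||H_0|$ from $F_1$. Finally, all of your $u_2$'s lie on $V(P_0)$, so $|U_v|\geq(1/2+\e/8)|H|$ is unattainable unless the longest path covers at least that many vertices, which $2$-expansion does not by itself guarantee; the paper separately counts off-path vertices $u\in B\setminus V(P)$, for which any single edge $ua$ with $a\in A$ already lengthens the path and hence $\{uv,ua\}$ is a booster.
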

\begin{proof}
Let $\delta,C>0$, where we will later require that $\delta$ be taken to be small, depending on $\e$, and then subsequently that $C$ be taken to be large. Let $\HH$ be the set of all 2-expander graphs $H_0$ with $V(H_0)\subset [n]$, $e(H_0)\leq 2\delta pn^2$ and $|H_0|\geq \e n$. Let $H_0\in \HH$. By Lemma~\ref{posalemma}, there are at least $|H_0|/8$ vertices $v$, say those in $W_0$, which appear at the end of a longest path in $H_0$.

For each $v\in W_0$, let $f(H_0,v)$ be the event that, for each $(1/2+\e)$-residual subgraph $H$ of $G=G(n,p)$ with $V(H)=V(H_0)$, there is some set $U\subset V(H)$ with $|U|\geq (1/2+\e/8)|H|$ and disjoint subsets $E_u\subset E(H-H_0)$, $u\in U$, so that $|E_{u}|\geq 50/\e\delta$ and, for each $u\in U$ and $e\in E_{u}$, $\{uv,e\}$ is a booster for $H_0$.

\begin{claim} For each $H_0\in \HH$ and $v\in W_0$, $\P(\barr{f(H_0,v)})\leq \exp(-\e^5 pn^2/10^6)$.\label{claimf}
\end{claim}

From this claim, we can show, as follows, that, with probability $1-o(n^{-3})$, $f(H_0,v)$ holds for each $H_0\in \HH$ and $v\in W_0$ with $H_0\subset G$. Note that the events $f(H_0,v)$ and $H_0\subset G$ are independent. The probability that $f(H_0,v)$ does not hold and $H_0\subset G$ for some $H_0\in \HH$ and $v\in W_0$ is then at most
\begin{align*}
\sum_{H_0\in \HH}\sum_{v\in W_0}\P (\barr{f(H_0,v)})\cdot\P(H_0\subset G)
 &\leq n\cdot \exp(-\e^5 pn^2/10^6)\cdot\sum_{H_0\subset K_n,e(H_0)\leq 2\d pn^2}\P(H_0\subset G)\\
&\leq n\cdot\exp(-(\e^5/10^6-2\d\log(e/2\delta))pn^2),
\end{align*}
where we have used Proposition~\ref{expectedH}.
Therefore, we can choose $\delta$ to be sufficiently small, depending only on $\e$, for this to hold with probability $o(n^{-3})$. Thus, we need only prove Claim~\ref{claimf}.

\begin{proof}[Proof of Claim~\ref{claimf}.] Let $H_0\in \HH$ and $v\in W_0$. Let $k$ be the length of a longest path in $H_0$.
Pick some path $P\subset H_0$ with length~$k$ and $v$ as an end-vertex. By Lemma~\ref{posalemma}, as $H_0$ is a 2-expander, we can find a set $A\subset V(P)$ with $|A|=\e |H_0|/30$ so that, for each $a\in A$, there is a $v,a$-path, $P_a$ say, in $H_0[V(P)]$ with length $k$.

Let $B=V(H_0)\setminus A$. For each $u\in B\cap V(P)$, let $X_{u}$ be the set of pairs $\{a,b\}$ with $a\in A$ and $b\in V(P)\setminus A$ so that~$P_a$ could be rotated in $H_0+ab$ with $v$ fixed, using the edge $ab$, to get $u$ as a new endpoint. Note that, here, $\{uv,ab\}$ is a booster for $H_0$, and the sets $X_u$, $u\in B\cap V(P)$ are disjoint and satisfy $|X_u|\leq |A|$. For each $a\in A$, there are $|P|-|A|$ value of $u\in V(P)\setminus A$ for which $P_a$ can be rotated using $au$. Each such rotation will give a different endvertex, at most $|A|$ of which can be in $A$. Thus, there are at least $|P|-2|A|$ values of $u\in V(P)\setminus A=B\cap V(P)$ for which $a\in X_u$. Therefore, counting over all $a\in A$, we have that $|\cup_{u\in B\cap V(P)}X_{u}|\geq |A|(|P|-2|A|)$.

For each $u\in B\setminus V(P)$, let $X_{u}$ be the set of pairs $\{u,a\}$ with $a\in A$. Note that not only are the sets $X_u$ disjoint from each other, but, as they contain no vertices in $V(P)\setminus A$, they are also disjoint from each set $X_u$, $u\in B\cap V(P)$. Note that, for each $u\in B\setminus V(P)$, $|X_u|= |A|$ and $\{ua\}$, and hence $\{uv,ua\}$, is a booster for $H_0$.  Note that $|\cup_{u\in B\setminus V(P)}X_{u}|= |A|(|H_0|-|P|)$, and hence
\begin{equation}\label{thisotherone}
|\cup_{u\in B}X_u|\geq |A|(|H_0|-|P|)+|A|(|P|-2|A|)=|A||H_0|-2|A|^2.
\end{equation}


For each $u\in B$, let $Y_u=X_u\setminus E(H_0)$.
Then,
\[
|\cup_{u\in B}Y_u|\overset{\eqref{thisotherone}}{\geq} |A||H_0|-2|A|^2-2\delta pn^2\geq (1-\e/8)|A||H_0|,
\]
where the second inequality follows for sufficiently small $\d$. For each $u\in B$, we have, furthermore, that $|Y_u|\leq |X_u|\leq |A|$.
For each $a\in A$, let $Z_a$ be the pairs in $\cup_{u\in B}Y_u$ which contain $a$, so that $\cup_{a\in A}Z_a=\cup_{u\in B}Y_u$, and, hence, $|\cup_{a\in A}Z_a|=|\cup_{u\in B}Y_u|\geq (1-\e/8)|A||H_0|$.

Define the events $F_1$, $F_2$, and $F_3$, as follows.
\begin{enumerate}[label = $F_1$:]
\item $|\cup_{a\in A}(Z_a\cap E(G))|\geq (1-\e/4)p|A||H_0|$.
\end{enumerate}
\begin{enumerate}[label = $F_2$:]
\item $\sum_{a\in A}d_{G[V(H_0)]-H_0}(a)\leq (1+\e/8)p|A||H_0|$.
\end{enumerate}
\begin{enumerate}[label = $F_3$:]
\item If $U\subset B$ with $|U|\leq (1/2+\e/8)|H_0|$, then $|\cup_{u\in U}(Y_u\cap E(G))|< (1/2+\e/4)p|A||H_0|$.
\end{enumerate}

As $|A||H_0|\geq \e^3n^2/30$, and $\E|\cup_{a\in A}(Z_a\cap E(G))|\geq (1-\e/8)p|A||H_0|$, by Lemma~\ref{chernoff}, we have $\P(\barr{F_1})\leq \exp(-\e^5pn^2/10^5)$.

Furthermore, using Proposition~\ref{Chern}, and as $|A||H_0|\geq \e^3n^2/30$, we have
\[
\P(\barr{F_2})\leq \P\Big(\sum_{a\in A}d_{G[V(H_0)]-H_0}(a)> (1+\e/8)p|A||H_0|\Big)\leq 4\exp (-\e^5pn^2/10^5).
\]

Finally, for each $U\subset B$ with $|U|\leq (1/2+\e/8)|H_0|$, we have $\E|\cup_{u\in U}(Y_u\cap E(G))|\leq (1/2+\e/8)p|A||H_0|$. Therefore, using Proposition~\ref{Chern}, as $|A||H_0|\geq \e^3n^2/30$, we have
\[
\P(\barr{F_3})\leq 2^{n}\cdot 4\exp(-\e^5pn^2/10^5)\leq \exp(-\e^5pn^2/(2\cdot 10^5)),
\]
for sufficiently large $C$ as $pn^2\geq Cn$.
Therefore, $\P(F_1\land F_2\land F_3)\geq 1-\exp(-\e^5pn^2/10^6)$. We will show that, if $F_1$, $F_2$, and $F_3$ hold, then $f(H_0,v)$ holds, completing the proof of the claim.

Suppose then that $F_1$, $F_2$, and $F_3$ hold, and let $H$ be a $(1/2+\e)$-residual subgraph of $G$ with $V(H)=V(H_0)$. For each $u\in B$, let $E_{u}=Y_u\cap E(H)$, so that, for each $e\in E_{u}$, $\{uv,e\}$ is a booster for $H_0$. Let $U$ be the set of vertices in $B$ for which $|E_{u}|\geq 50/\e\delta$. We need only show that $|U|\geq (1/2+\e/8)|H_0|$.

Now, taking $\d$ to be small, and then, depending on $\d$, $C$ to be large,
\begin{align*}
|\cup_{u\in U}(Y_u\cap E(G))|&\geq |\cup_{u\in B}(Y_u\cap E(G))|-|\cup_{u\in B\setminus U}(Y_u\cap E(G-H))|-\sum_{u\in B\setminus U}|E_u|
\\
&\geq |\cup_{a\in A}(Z_a\cap E(G))|-|\cup_{a\in A}(Z_a\cap E(G-H))|-(50/\e\delta)|B\setminus U|
\\
& \overset{F_1}{\geq} (1-\e/4)p|A||H_0|-\sum_{a\in A}(1/2-\e)d_{G[V(H_0)]}(a)-50|H_0|/\e\d
\\
&\overset{F_2}{\geq} (1-\e/4)p|A||H_0|-(1/2-\e)(1+\e/8)p|A||H_0|-2e(H_0)-10^4p|A||H_0|/\e^3\d C
\\
& \geq (1/2-\e/4+\e-\e/8-120\delta/\e^3- 10^4/\e^3\d C)p|A||H_0|
\\
& \geq (1/2+\e/4)p|A||H_0|.
\end{align*}
Therefore, as $F_3$ holds, we have $|U|\geq (1/2+\e/8)|H_0|$. This completes the proof of the claim.
\end{proof}
Thus, for sufficiently small $\delta$, and then sufficiently large $C$, the property in the lemma holds.
\end{proof}

Using Lemma~\ref{rotatemany}, we can now prove Lemma~\ref{rotateresil}, as follows.

\begin{proof}[Proof of Lemma~\ref{rotateresil}] Let $\d,C>0$ be constants for which Lemma~\ref{rotatemany} holds with $\e$.
With probability $1-o(n^{-3})$, $G$ has the property from Lemma~\ref{rotatemany} and, using Lemma~\ref{chernoff}, at most $pn^2$ edges. Suppose that $H_0$ is a 2-expander with at least $\e n$ vertices and at most $2\delta pn^2$ edges and that $H$ is a $(1/2+\e)$-residual subgraph of $G$ with $V(H)=V(H_0)$.
By the property from Lemma~\ref{rotatemany}, we can find a set $W$ of $|H|/8$ vertices $v\in V(H)$ for which there is some set $U_v\subset V(H)$ with $|U_v|\geq (1/2+\e/8)|H|$ and disjoint subsets $E_{uv}\subset E(H-H_0)$, $u\in U_v$, so that $|E_{uv}|\geq 50/\e\delta$ and, for each $u\in U_v$ and $e\in E_{uv}$, $\{uv,e\}$ is a booster for $H_0$.

Let $H_1$ be a random subgraph of $H-H_0$ with edges chosen independently at random with probability $\delta$. By Lemma~\ref{chernoff}, almost surely, $e(H_1)\leq 2\delta pn^2$.
For each $v\in W$, let $U'_v\subset U_v$ be the set of vertices $u$ for which $E_{uv}\cap E(H_1)\neq \emptyset$, so that $\P(u\notin U'_v)\leq (1-\delta)^{50/\e\delta}\leq \exp(-50/\e)\leq \e/32$. Therefore, as the sets $E_{uv}$, $u\in U_v$, are disjoint, we have, by Lemma~\ref{chernoff},
\[
\P(|U'_v|\geq (1/2+\e/16)|H_0|)\geq \P(|U'_v|\geq (1-\e/32)|U_v|)=1-o(n^{-1}).
\]
Therefore, some such graph $H_1\subset H$ with $e(H_1)\leq 2\delta pn^2$ exists with $|U'_v|\geq (1/2+\e/16)|H_0|$ for each $v\in W$.
\end{proof}

\subsection{Finding many boosters in $H$}\label{secextend}
Lemma~\ref{rotateresil} demonstrates that there are likely to be many boosters for a sparse expander $H_0\subset G$ with help from some other sparse subgraph $H_1$. We now show that many of these boosters are likely to exist in $G$, sufficiently many that some exist in any $(1/2+\e)$-residual subgraph $H$ of $G$.

\lem\label{extend} Let $0<\e\leq 1$. There exists $\d,C>0$ such that the random graph $G=\GG(n,p)$, with $p\geq C/n$, has the following property with probability $1-o(n^{-3})$. Suppose~$H_0$ and $H_1$ are edge-disjoint subgraphs of $G$ with $V(H_0)=V(H_1)$, $e(H_0),e(H_1)\leq 2\d pn^2$ and $|H_0|\geq \e n$, and where $H_0$ has $\e$-many boosters with help from $H_1$. Then, for any $1/2$-residual subgraph $H\subset G$ with $V(H)=V(H_0)$ there is some $e_1,e_2\in E(H)\cup E(H_1)$ such that $\{e_1,e_2\}$ is a booster for $H_0$.
\ma
\pr Let $\d,C>0$ be determined later, where $\delta$ will be taken to be small depending on $\e$ and $C$ to be large depending on $\delta$. Let $\mathcal{H}$ be the set of pairs $(H_0,H_1)$ where $H_0$ and $H_1$ are edge-disjoint subgraphs with $V(H_0)=V(H_1)\subset [n]$, $e(H_0),e(H_1)\leq 2\d pn^2$, and $|H_0|\geq \e n$, and where $H_0$ has $\e$-many boosters with help from $H_1$.

Fix $(H_0,H_1)\in \mathcal{H}$, and, for each $x\in V(H_0)$, let $V_x$ be the set of vertices $v\in V(H_0)\setminus \{x\}$ for which there is some edge $e\in H_1$ so that $\{xv,e\}$ is a booster for $H_0$. Let $X=\{x\in V(H_0):|V_x|\geq (1/2+\e)|H_0|\}$, so that, by Definition~\ref{emany}, $|X|\geq \e|H_0|\geq \e^2 n$. Let $p\geq C/n$ and $G=G(n,p)$, and let $f(H_0,H_1)$ be the event that $\sum_{x\in X}d_{G-H_0-H_1}(x,V_x)\geq (1/2+\e/4)p |X||H_0|$. Note that
\begin{align*}
\E\Big(\sum_{x\in X}d_{G-H_0-H_1}(x,V_x)\Big) &\geq (1/2+\e)p |X||H_0|-2e(H_0)-2e(H_1)
\\
&\geq (1/2+\e/2)p |X||H_0|\geq \e^3pn^2/2.
\end{align*}
Therefore, for $C$ sufficiently large, by Proposition~\ref{Chern}, we have
\begin{equation}\label{thisone}
\P(\barr{f(H_0,H_1)})\leq 4\exp(-(\e/4)^2\cdot \e^3pn^2/18)\leq \exp(-\e^5pn^2/10^3).
\end{equation}
Note that the events $f(H_0,H_1)$, $H_0\subset G$ and $H_1\subset G$ are independent. Therefore, the probability that $f(H_0,H_1)$ does not hold and $H_0,H_1\subset G$ for some $(H_0,H_1)\in\mathcal{H}$ is at most
\begin{align*}
\sum_{(H_0,H_1)\in\mathcal{H}}\P(\barr{f(H_0,H_1)} \wedge H_0,H_1\subset G) &= \sum_{(H_0,H_1)\in\mathcal{H}}\P(\barr{f(H_0,H_1)})\P(H_0\subset G)\P(H_1\subset G ) \\
&\overset{\eqref{thisone}}{\leq}  \exp(-\e^5pn^2/10^3)\cdot \left(\sum_{H_0\subset K_n,e(H_0)\leq 2\d pn^2}\P(H_0\subset G)\right)^2\\
&\leq \exp(-(\e^5/10^3-4\d\log(e/2\delta))pn^2),
\end{align*}
using Proposition~\ref{expectedH}.
Therefore, choosing $\d$ sufficiently small, and $C$ sufficiently large, as $p\ge C/n$, with probability $1-o(n^{-3})$, $f(H_0,H_1)$ holds for every $(H_0,H_1)\in \mathcal{H}$ with $H_0,H_1\subset G$.

By this, and Lemma \ref{AKS1equiv}, with probability $1-o(n^{-3})$ we can assume that $f(H_0,H_1)$ holds for every $(H_0,H_1)\in \mathcal{H}$ with $H_0,H_1\subset G$, and, if $X,U\subset V(G)$, $|X|\geq \e^2n$, and $|U|\geq \e n$, then $e_G(X,U)\leq (1+\e/6)p|X||U|$.

Now, take any $(H_0,H_1)\in \mathcal{H}$ with $H_0,H_1\subset G$. Using the notation for a fixed $(H_0,H_1)\in \mathcal{H}$ above, as $f(H_0,H_1)$ holds,
\[
\sum_{x\in X}(d_{G-H_0-H_1}(x,V_x)-d_{G[V(H_0)]}(x)/2)\geq (1/2+\e/4)p|X||H_0| -e_G(X,V(H_0))/2> 0.
\]
Thus, there is some $x\in V(H_0)$ with $d_G(x,V_x)>d_{G[H_0]}(x)/2$. For any $1/2$-residual subgraph $H\subset G$ with $V(H)=V(H_0)$, then, we have $d_H(x,V_x)\geq d_G(x,V_x)-d_{G[V(H_0)]}(x)/2> 0$. That is, there must be some $v\in N_{H}(x)\cap V_x$. By the definition of $V_x$ at the start of the proof, there is some $e\in E(H_1)$ such that $\{e,xv\}$ is a booster for $H_0$. As this holds for each $(H_0,H_1)\in \mathcal{H}$ with $H_0,H_1\subset G$ and each $1/2$-residual subgraph $H\subset G$ with $V(H)=V(H_0)$, this completes the proof.
\oof

\subsection{Proof of Theorem~\ref{newtheorem}}\label{secnewthmproof}

Armed with Lemma~\ref{rotateresil} and Lemma~\ref{extend}, we can now prove Theorem~\ref{newtheorem}.

\begin{proof}[Proof of Theorem~\ref{newtheorem}] Let $\d, C>0$ be such that Lemma~\ref{rotateresil} holds for $\e$, Lemma~\ref{extend} holds for $\e/16$, and, furthermore, let $C$ be sufficiently large that $C\delta\geq 2$. Letting $p\geq C/n$, then, with probability $1-o(n^{-3})$, $G=G(n,p)$ has the property in Lemma~\ref{rotateresil} with $\e$ and the property in Lemma~\ref{extend} with $\e/16$. Let $H$ be a $(1/2+\e)$-residual subgraph of $G$ with $|H|\geq \e n$ which contains a spanning $2$-expander, $H_0$ say, with at most $\delta pn^2$ edges.

For each $1\leq i\leq n$, find $e_{i,1},e_{i,2}\in E(H)$ such that $\{e_{i,1},e_{i,2}\}$ is a booster for $H_{i-1}$, and let $H_i=H_{i-1}+e_{i,1}+e_{i,2}$. This is possible, for each $1\leq i\leq n$, as follows. Noting that $e(H_{i-1})\leq \delta pn^2+2n\leq 2\delta pn^2$, by the property from Lemma~\ref{rotateresil}, there is some subgraph $H'\subset H-H_{i-1}$ with $e(H')\leq 2\delta pn^2$ so that $H_{i-1}$ has $(\e/16)$-many boosters with help from~$H'$. Therefore, from the property from Lemma~\ref{extend} there is some $e_{i,1},e_{i,2}\in E(H)$ such that $\{e_{i,1},e_{i,2}\}$ is a booster for $H_{i-1}$, as required.

We have added $n$ boosters to $H_0$ to get $H_n\subset H$, and therefore $H_n$, and hence $H$, is Hamiltonian.
\end{proof}


\section{Proof of Lemma~\ref{nonresil}}\label{secnonres}
We will prove Lemma~\ref{nonresil}, for each $\e>0$, by showing that, in a typical linear vertex subset $U$ of a binomial random graph $G=G(n,p)$, $G[U]$ can be made into an unbalanced bipartite graph without deleting any more than $(1/2+\e)$ of the edges around any vertex.
\begin{proof}[Proof of Lemma~\ref{nonresil}] Note that we may assume $0<\e<1/8$, and let $C=10^3/\e^7$.
By Lemma~\ref{AKS1equiv}, if $p\geq C/n$, then with probability $1-o(n^{-3})$, $G=G(n,p)$ has the following property.

\begin{enumerate}[label = \textbf{P}]
\item If $A,B\subset V(G)$ and $|A||B|\geq \e^5n^2$, then $(1-\e/3)p|A||B|\leq  e_G(A,B)\leq (1+\e/3)p|A||B|$. \label{Rnew}
\end{enumerate}

Now, let $U\subset V(G)$ satisfy $|U|\geq \e n$. We will show that $G[U]$ is not $(1/2+\e)$-resiliently Hamiltonian.
Pick a vertex partition $A\cup B=U$, so that $e_{G}(A,B)$ is maximised. For each $v\in A$, $d_{G}(v,A)\leq d_{G}(v,B)$, otherwise, moving $v$ from $A$ into $B$ would increase $e_{G}(A,B)$. Similarly, for each $v\in B$, $d_{G}(v,B)\leq d_{G}(v,A)$. Therefore, if $H=G[A]\cup G[B]$, then, for each $v\in V(G)$, $d_H(v)\leq d_{G[U]}(v)/2$. If $|A|\neq |B|$, then, as $G[U]-H$ is an unbalanced bipartite graph, $G[U]-H$ is not Hamiltonian, and thus, if $|A|\neq |B|$, $G[U]$ is not $(1/2)$-resiliently Hamiltonian.

Suppose then that $|A|=|B|$. Let $U_0\subset U$ be the set of vertices with degree at most $1/\e$ in $G[U]$.
\begin{claim} $|U_0|<\e^3 n$.\label{mornclaim}
\end{claim}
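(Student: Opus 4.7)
The plan is to prove the claim by contradiction, leveraging Property P applied to the pair $(U_0, U)$. Suppose for contradiction that $|U_0| \geq \e^3 n$. Since we are given $|U| \geq \e n$ and $\e < 1$, we have $|U_0||U| \geq \e^4 n^2 \geq \e^5 n^2$, so Property P applies to this pair. This yields the lower bound
\[
e_G(U_0, U) \geq (1-\e/3)\,p\,|U_0|\,|U|.
\]

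On the other hand, recalling from the notation section that $e_G(U_0,U)$ counts ordered pairs $(x,y)$ with $x \in U_0$, $y \in U$ and $xy \in E(G)$, and since $U_0 \subset U$, we have $e_G(U_0, U) = \sum_{x \in U_0} d_{G[U]}(x)$. By the definition of $U_0$, every vertex in $U_0$ has degree at most $1/\e$ in $G[U]$, so
\[
e_G(U_0, U) \leq |U_0|/\e.
\]

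Combining the two bounds and cancelling $|U_0|$ gives $1/\e \geq (1-\e/3)\,p\,|U|$. Substituting $p \geq C/n = 10^3/(\e^7 n)$ and $|U| \geq \e n$ yields $1/\e \geq (1-\e/3)\cdot 10^3/\e^6$, which, using $\e < 1/8$, is easily seen to be false. This contradiction establishes $|U_0| < \e^3 n$. I do not anticipate any genuine obstacle here: the entire argument is a one-shot application of the regularity-style estimate in Property P together with the trivial degree bound on $U_0$, and the constant $C = 10^3/\e^7$ has clearly been chosen precisely to make the final inequality fail by a wide margin.
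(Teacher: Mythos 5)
Your proof is correct and follows essentially the same approach as the paper: both arguments play the lower bound on edge counts from Property P against the trivial upper bound $|U_0|/\e$ coming from the degree condition defining $U_0$. The only cosmetic difference is that you apply P to the overlapping pair $(U_0,U)$ (correctly using the ordered-pair convention for $e_G(A,B)$), whereas the paper passes to a subset $U'\subset U_0$ of size exactly $\e^3 n$ and counts edges between the disjoint sets $U'$ and $U\setminus U'$.
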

\begin{proof}[Proof of Claim~\ref{mornclaim}] Suppose, for contradiction, that we may take a set $U'\subset U_0$ with $|U'|=\e^3 n$. There are at most $|U'|/\e=\e^2n$ edges between $|U'|$ and $|U\setminus U'|$. However, as $|U'||U\setminus U'|\geq \e^3 n\cdot |U|/2\geq \e^5 n^2$, by~\ref{Rnew}, there are at least $\e^5pn^2/2> \e^2n$ edges between $|U'|$ and $|U\setminus U'|$, a contradiction.
\end{proof}

From Claim~\ref{mornclaim}, we have that $|U_0\cup N_{G[U]}(U_0)|\leq 2\e^2 n$. Let $A_0=A\setminus (U_0\cup N_{G[U]}(U_0))$.  As $|A|=|B|$, we have $|A|,|B|\geq \e n/2$, and hence $|A_0|\geq \e n/4$.
Using~\ref{Rnew}, and as $|A|=|B|$,
\begin{align*}
\sum_{x\in A_0}d(x,B) - \left(1+\e\right)\sum_{x\in A_0}d(x,A)
&=e(A_0,B)-\left(1+\e\right)e(A_0,A)
\\
&\leq \left(1+\e/3\right)p|A_0||B|-\left(1+\e\right)\left(1-\e/3\right)p|A_0||A|\leq 0.
\end{align*}
Therefore, there exists $x\in A_0$ with $d(x,B)\leq (1+\e)d(x,A)$, so that $d(x,B)\leq (1+\e)d_{G[U]}(x)/2$. Let $A'=A\setminus\{x\}$ and $B'=B\cup\{x\}$ be a new partition of $U$. Take $H'=G[A']\cup G[B']$, so that, by the choice of $x$, $d_{H'}(x)\leq (1+\e)d_{G[U]}(x)/2$. For each vertex $v\in U\setminus N_{G[U]}(x)$, with $v\neq x$, we did not change where its neighbours lie in the partition, so $d_{H'}(v)\leq d_{G[U]}(v)/2$. For each vertex $v\in N_{G[U]}(x)$, we have $v\notin U_0$ by the choice of $A_0$, so that $d_{G[U]}(v)\geq 1/\e$, and we moved one of its neighbours, $x$, across the partition. Thus, $d_{H'}(v)\leq d_{G[U]}(v)/2+1\leq (1/2+\e)d_{G[U]}(v)$. As $|A'|\neq |B'|$, $G[U]-H'$ is an unbalanced bipartite graph, and hence not Hamiltonian. Thus, $G[U]$ is not $(1/2+\e)$-resilently Hamiltonian.
\end{proof}


\section{Resilient large set expansion}\label{secresil}

Set against small set expansion, it is comparatively straightforward to show that our random graphs resiliently contain a sparse subgraph in which large sets expand. We show this for both Theorem~\ref{hamres} and Theorem~\ref{coreres}, through the following definition and theorem.

\de A graph $H$ is a \emph{$(2,k)$-expander} if, for every subset $U\subset V(H)$, with $|U|\leq k$, we have $|N(U)|\geq 2|U|$.
\fn

Note that, in contrast to the definition of a 2-expander (Definition~\ref{expdefn}), a $(2,k)$-expander need not be connected.

\begin{theorem}\label{newnewtheorem}
For each $\e>0$, there exists $\d,C>0$ such that, if $p\geq C/n$, then $G=G(n,p)$ has the following property with probability $1-o(n^{-3})$. Any $(1/2+\e)$-residual subgraph $H$ of $G$, with $|H|\geq \e n$, which contains a spanning $(2,C/p)$-expander with at most $\delta pn^2$ edges is Hamiltonian.
\end{theorem}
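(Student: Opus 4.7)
The plan is to reduce Theorem~\ref{newnewtheorem} to Theorem~\ref{newtheorem} by showing that every $H$ satisfying the hypotheses contains a spanning $2$-expander on $V(H)$ with at most $\delta_0 pn^2$ edges, where $\delta_0,C_0$ are the constants returned by Theorem~\ref{newtheorem} applied with $\e$. We will set $\delta:=\delta_0/2$, $q:=\delta_0/4$, and pick $C$ sufficiently large in terms of $\delta_0$ and $\e$ so that $C\geq C_0$ and Lemma~\ref{AKS1equiv} (applied with error $\e/10$) gives concentration for any pair of sets of sizes at least $C/p$ and at least $\e n/8$. We assume $G=G(n,p)$ satisfies the conclusions of both Theorem~\ref{newtheorem} and Lemma~\ref{AKS1equiv}, which holds simultaneously with probability $1-o(n^{-3})$.

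The first main step is to establish resilient expansion of $H$ above the scale $C/p$. For any $U\subseteq V(H)$ with $|U|\geq C/p$, Lemma~\ref{AKS1equiv} implies $\sum_{u\in U}d_{G[V(H)]}(u)\leq (1+\e/10)p|U||V(H)|$ via concentration of $e_G(U)$ and $e_G(U,V(H)\setminus U)$, while the $(1/2+\e)$-residual hypothesis bounds the number of $G[V(H)]$-edges at $U$ that are missing in $H$ by $(1/2-\e)(1+\e/10)p|U||V(H)|$. Combined with the lower bound $e_G(U,S)\geq (1-\e/10)p|U||S|$ for any $S\subseteq V(H)\setminus U$ of size at least $\e n/8$, this yields
\[
e_H(U,S)\geq p|U||V(H)|\cdot \bigl[(1-\tfrac{\e}{10})\tfrac{|S|}{|V(H)|}-(1/2-\e)(1+\tfrac{\e}{10})\bigr].
\]
Setting $|S|\geq |V(H)|-3|U|\geq 5|V(H)|/8$ (valid when $|U|\leq |V(H)|/8$) gives $e_H(U,S)\geq p|U||V(H)|/10$, while taking $S=V(H)\setminus U$ for $|U|\leq |V(H)|/2$ gives $e_H(U,V(H)\setminus U)\geq \e p|U||V(H)|/2$.

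Next, we construct a sparse subgraph $H_1\subseteq H$ by including each edge of $H$ independently with probability $q$. By Chernoff's inequality and $e(H)\leq (1+\e)pn^2/2$, we have $e(H_1)\leq \delta_0 pn^2/2$ with probability $1-o(1)$. For a fixed pair $(U,S)$ with $|U|\geq C/p$ as in the first bound above, $\P(e_{H_1}(U,S)=0)\leq \exp(-q\cdot p|U||V(H)|/10)\leq \exp(-q\e Cn/10)$ using $|U|\geq C/p$ and $|V(H)|\geq \e n$. A union bound over at most $3^{|V(H)|}\leq 3^n$ disjoint pairs $(U,S)$ in $V(H)$ gives failure probability $\exp(n(\log 3-q\e C/10))$, which is $o(1)$ for $C$ chosen large enough. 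An analogous union bound, using the second lower bound, handles connectivity: with probability $1-o(1)$, $|N_{H_1}(U)|\geq 1$ for all $U$ with $C/p\leq |U|\leq |V(H)|/2$. Hence some $H_1$ exists that is sparse and satisfies $|N_{H_1}(U)|\geq 2|U|$ for all $U$ with $C/p\leq |U|\leq |V(H)|/8$, and $|N_{H_1}(U)|\geq 1$ for all $U$ with $C/p\leq |U|\leq |V(H)|/2$.

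Setting $H^*:=H_0\cup H_1$, we have $e(H^*)\leq \delta pn^2+\delta_0 pn^2/2\leq \delta_0 pn^2$. For $U\subseteq V(H)$ with $|U|\leq |V(H)|/8$, the $(2,C/p)$-expander property of $H_0$ (when $|U|\leq C/p$) and the property of $H_1$ (when $|U|>C/p$) combine to give $|N_{H^*}(U)|\geq 2|U|$; connectivity of $H^*$ follows by the same dichotomy applied to any non-trivial bipartition $V(H)=U\cup(V(H)\setminus U)$ with $|U|\leq |V(H)|/2$. Thus $H^*$ is a spanning 2-expander of $V(H)$ with at most $\delta_0pn^2$ edges, and Theorem~\ref{newtheorem} applied to $H$ with this spanning 2-expander concludes that $H$ is Hamiltonian. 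The main obstacle is the computation of the second paragraph: the $(1/2+\e)$-residual assumption leaves only a constant surplus of order $p|U||V(H)|$ when $|S|\geq 5|V(H)|/8$, so constants must be tracked carefully; it is also the reason the small-set range $|U|\leq C/p$ must be handled separately via the given $(2,C/p)$-expander, since Lemma~\ref{AKS1equiv} fails to give concentration below that scale.
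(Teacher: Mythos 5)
Your proposal is correct and follows essentially the same route as the paper: augment the given $(2,C/p)$-expander with a sparse random subgraph of $H$ whose edges witness expansion of all sets larger than $C/p$ (using the $(1/2+\e)$-residual condition together with Lemma~\ref{AKS1equiv} to lower-bound $e_H(U,S)$, then a union bound over pairs of disjoint sets), verify that the union is a connected spanning $2$-expander with few edges, and invoke Theorem~\ref{newtheorem}. The only cosmetic difference is that the paper demands $e_{H_2}(A,B)>0$ only for $|B|\geq |H|/2$ and deduces $|N_{H_2}(A)|\geq |H|/2-|A|\geq 2|A|$, whereas you work with $|S|\geq |V(H)|-3|U|$ directly; both are fine.
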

\begin{proof}
By Theorem~\ref{newtheorem}, we may take $\delta>0$ and $C\geq 200/\e^3\delta$ such that, if $p\geq C/n$ then we have the following property with probability $1-o(n^{-3})$.

\begin{enumerate}[label=\bfseries Q\arabic*]\addtocounter{enumi}{0}
\item Any $(1/2+\e)$-residual subgraph $H\subset G$, with $|H|\geq \e n$, which contains a spanning $2$-expander with at most $2\delta pn^2$ edges is Hamiltonian.\label{RR2n}
\end{enumerate}

Let $p\geq C/n$ and $G=G(n,p)$, so that with probability $1-o(n^{-3})$ we have both~\ref{RR2n} and, by Lemma~\ref{AKS1equiv}, the following property.

\begin{enumerate}[label=\bfseries Q\arabic*]\addtocounter{enumi}{1}
\item  If $A, B\subset V(G)$ satisfy $|A|\geq C/p$ and $|B|\geq \e n/2$, then \label{RR1n}
\[
(1-\e/3)p|A||B|\leq e(A,B)\leq (1+\e/3)p|A||B|.
\]
\end{enumerate}

We will show that, for sufficiently large $n$, $G$ satisfies the property in the theorem.
Let $H$ be a $(1/2+\e)$-residual subgraph of~$G$, with $|H|\geq \e n$, which contains a spanning $(2,C/p)$-expander $H_1$ with $e(H_1)\leq \delta pn^2$. 
By~\ref{RR1n}, for all subsets $A\subset V(H)$, $B\subset V(H)\setminus A$ with $|A|\geq C/p$ and $|B|\geq |H|/2$, we have
\begin{align}
e_H(A,B)&\geq e_G(A,B)-(1/2-\e)e_G(A,V(H)) \geq (1-\e/3)p|A||B|-(1/2-\e)(1+\e/3)|A||H|
\nonumber \\
&\geq (1-\e/3)p|A||B|-(1-\e)p|A||B|= 2\e p|A||B|/3\geq C\e^2 n/3.\label{hmm}
\end{align}

Let $H_2\subset H$ be a random subgraph of $H$ where each edge is included with probability $\delta/2$. Therefore, the probability there exists some disjoint $A,B\subset V(H)$ for which $|A|\geq C/p$, $|B|\geq |H|/2$ and $e_{H_2}(A,B)=0$ is, by~\eqref{hmm}, at most
\[
2^n\cdot 2^n\cdot (1-\delta/2)^{C\e^2 n/3}\leq 4^ne^{-C \e^2\delta n/6}=o(1).
\]
By~\ref{RR1n}, $e(G)\leq pn^2$, and thus, by Lemma~\ref{chernoff}, $H_2$ almost surely has at most $\delta pn^2$ edges.  Therefore, we may take some $H_2\subset H$ such that $e(H_2)\leq \delta pn^2$ and, for each disjoint $A,B\subset V(H)$ with $|A|\geq C/p$ and $|B|\geq |H|/2$, we have $e_{H_2}(A,B)>0$. Let $H_0=H_1\cup H_2\subset H$, noting that $e(H_0)\leq 2\delta pn^2$. We will show that $H_0$ is a 2-expander, so that, by~\ref{RR2n}, $H$ is Hamiltonian, as required.

Firstly, for each $A\subset V(H)$ with $C/p\leq |A|\leq |H|/8$, there are no edges between $A$ and $V(H)\setminus (A\cup N_{H_2}(A))$ in $H_2$, and, therefore, $|A\cup N_{H_2}(A)|\geq|H|/2$. Thus, $|N_{H_2}(A)|\geq |H|/2-|A|\geq 3|H|/8\geq 2|A|$.

Secondly, the graph $H_0$ is connected. Indeed, suppose to the contrary that $H_0$ has a component with vertex set $A$ for which $|A|\leq |H|/2$. Then, as $H_1$ is a $(2,C/p)$-expander, from $N_{H_0}(A)=\emptyset$ we have $|A|>C/p$. As there are no edges between $A$ and $V(H)\setminus A$ in $H_2$, this gives a contradiction.

As $H_1$, and thus $H_0$, is a $(2,C/p)$-expander, $H_0$ is a $2$-expander, as required.
\end{proof}


\section{Born resilience of Hamiltonicity}\label{secfull}

By Lemma~\ref{switch} (with $p=M/\binom{n}{2}$) and Theorem~\ref{newnewtheorem}, to prove Theorem~\ref{hamres} it is sufficient to show the following lemma.

\begin{lemma}\label{smallexpall} For each $\delta,C >0$, in almost every random graph process $\{G_M\}_{\geq 0}$ with $n$ vertices, for each $M$ and $\e>0$, if $\delta(G_M)\geq 2$, then any spanning $(1/2+\e)$-residual subgraph $H\subset G_M$ contains a spanning $(2,C\binom{n}{2}/M)$-expander with at most $\delta M$ edges.
\end{lemma}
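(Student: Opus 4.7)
The plan is to pass from the random graph process to the binomial model via Lemma~\ref{switch}, and then, for each $(1/2+\e)$-residual subgraph $H$ of $G = G(n,p)$ with $p = M/\binom{n}{2}$, to build a sparse $(2,C/p)$-expander $H_1 \subset H$ by a randomised procedure that treats vertices of $G$ of low and high degree differently. Since it suffices to prove the statement for $G(n,p)$ with probability $1 - o(n^{-3})$ and then union-bound over $M$, I may restrict attention to values of $M$ for which $\delta(G) \ge 2$ has probability at least $n^{-3}$, i.e.\ to the regime $pn = \Omega(\log n)$; for smaller $p$ the statement is vacuous with sufficiently high probability.

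The scaffolding for the construction is a large constant $D = D(\delta, C, \e)$ and the partition $V = V^{\mathrm{low}} \cup V^{\mathrm{high}}$ defined by $V^{\mathrm{low}} = \{v : d_G(v) \le D\}$. Using Lemma~\ref{chernoff} on individual degrees, Lemma~\ref{AKS1equiv} on $e_G(A,B)$, and a first-moment count of the relevant small configurations, I show, each with probability $1 - o(n^{-3})$, that (a) $|V^{\mathrm{low}}|$ is very small (say $|V^{\mathrm{low}}| \le n/\log^{10} n$); (b) no two vertices of $V^{\mathrm{low}}$ lie at $G$-distance at most $2$, so every $v \in V^{\mathrm{high}}$ has at most one $V^{\mathrm{low}}$-neighbour; and (c) for every $U \subset V$ with $|U| = s \le C/p$, one has $|N_G(U) \setminus U| \ge \frac12 \min(n, spn)$.

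Given any $(1/2+\e)$-residual $H \subset G$ with $\delta(G) \ge 2$, I construct $H_1$ by keeping every edge of $H$ incident to $V^{\mathrm{low}}$ deterministically and each edge of $H$ between two $V^{\mathrm{high}}$-vertices independently with probability $q = \delta/4$. The deterministic part contributes at most $D|V^{\mathrm{low}}| = o(M)$ edges and the random part is concentrated at $qM \le \delta M/2$ by Chernoff, so $e(H_1) \le \delta M$ holds with high probability. For the $(2, C/p)$-expansion I case-split on a test set $U$ of size $s \le C/p$. If $U \subset V^{\mathrm{low}}$, property (b) makes the $N_H$-neighbourhoods pairwise disjoint and disjoint from $U$, giving $|N_{H_1}(U) \setminus U| = \sum_{v \in U} d_H(v) \ge 2s$ deterministically. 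If $U$ meets $V^{\mathrm{high}}$, a degree-sum bound combined with the $(1/2+\e)$-residual condition and (c) shows $|N_H(U) \cap V^{\mathrm{high}}| \ge c_1 \min(n, spn)$ for an absolute constant $c_1$, and then a Chernoff bound on the $q$-sample delivers $|N_{H_1}(U) \setminus U| \ge 2s$ except with probability at most $\exp(-c_2 q s \log n) = n^{-c_2 q s}$, which beats the $\binom{n}{s} \le n^s$ union bound once $D$ is chosen large in terms of $q$ and $C$.

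The main technical obstacle is this last expansion step for small test sets $U$, where the random $q$-sample must deliver enough distinct external neighbours uniformly over an exponential number of choices. The key input is $pn = \Omega(\log n)$ (implied by $\delta(G) \ge 2$), which makes the expected size of the sampled external neighbourhood $qspn = \Omega(s \log n)$, and so delivers the $n^{-\Omega(s)}$ per-set failure probability needed for the union bound. The well-separation property (b) of $V^{\mathrm{low}}$ is what prevents low-degree vertices in $U$ from creating neighbourhood collisions that would otherwise break this bound.
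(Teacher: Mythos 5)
Your high-level plan (split the vertices by degree, keep edges at low-degree vertices deterministically, and treat low- and high-degree test sets separately) is in the spirit of the paper's proof, but your construction of $H_1$ has a fatal flaw: the degree threshold $D$ is a \emph{constant} while the retention probability $q=\delta/4$ for high--high edges is also a constant. In the critical range $M=\Theta(n\log n)$, which you cannot avoid since $\delta(G_M)\ge 2$ first occurs there, $G_M$ typically contains $n^{\Omega(1)}$ vertices of degree at most, say, $\log n/100$; once $D$ is fixed these all lie in $V^{\mathrm{high}}$, yet after $q$-thinning such a vertex retains fewer than $2$ incident edges with probability at least $(1-q)^{\log n/100}\ge n^{-q/50}$. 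For small $\delta$ the expected number of vertices of $V^{\mathrm{high}}$ left with degree at most $1$ in $H_1$ is therefore $n^{\Omega(1)}$, so with high probability $H_1$ is not even a $(2,1)$-expander and the probabilistic-method conclusion fails. The same issue surfaces in your union bound: the claimed per-set failure probability $\exp(-c_2qs\log n)$ requires $|N_H(U)\cap V^{\mathrm{high}}|=\Omega(s\log n)$, and your intermediate claim $|N_H(U)\cap V^{\mathrm{high}}|\ge c_1\min(n,spn)$ is false both for singletons $U=\{v\}$ with $d_G(v)$ bounded and for mixed sets consisting mostly of well-separated degree-$2$ vertices plus one high-degree vertex, where $|N_H(U)|\approx 2s+pn$ while $c_1\min(n,spn)$ can be as large as $c_1Cn$. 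A secondary gap: your property (c) is a vertex-expansion statement about $G$ and cannot convert the degree-sum lower bound $\sum_{v\in U}d_H(v)=\Omega(spn)$ into a lower bound on $|N_H(U)|$; for that one needs an upper bound on $e_G(A,B)$ over all pairs of small sets (the paper's Lemma~\ref{mindegexp3}), which you gesture at with ``first-moment count'' but never state.

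The paper repairs exactly these points by (i) taking the low-degree threshold to be $M/10^3n=\Theta(pn)$ rather than a constant, so that Corollary~\ref{farmin} still gives well-separation of low-degree vertices while every ``high'' vertex has degree $\Omega(\log n)$, and (ii) replacing random thinning by the deterministic selection of $\min\{d_H(v),\delta pn/4\}$ edges at each vertex, which preserves minimum degree $2$ exactly and requires no union bound over test sets for the sparsification itself. The expansion of a set $A$ is then obtained by splitting $A$ into the vertices isolated within $A$ (which expand individually via minimum degree $2$ and separation) and the rest (at least half of which are high-degree, so Lemma~\ref{mindegexp3} applies); this decomposition is what correctly handles the mixed sets that break your case split. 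Patching your argument would require raising $D$ to $\Theta(pn)$ and reworking both cases, at which point you have essentially reproduced the paper's proof.
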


Typically, in graphs in $\{G_M\}_{M\geq 0}$, sets of vertices with small degree (here, $\leq M/10^3n$) will resiliently expand as these vertices form an independent set (see Section~\ref{secsmall}). Typically, sets with vertices with larger degree must resiliently expand to prevent overly dense subgraphs (see Section~\ref{secmed}). Combining this, we prove Lemma~\ref{smallexpall} in Section~\ref{secallsize}.

\subsection{Small degree vertices in the random graph process}\label{secsmall}
As is well-known, in almost every random graph process $\{G_M\}_{M\geq 0}$ with $n$ vertices, the first graph with minimum degree at least 2 has around $n(\log n+\log\log n)/2$ edges, as follows (see, for example, Bollob\'as~\cite[Theorems~2.2(ii) and 3.5]{bollorand}).

\begin{lemma}\label{isol} If $M= n(\log n+\log\log n-\omega(1))/2$, then, almost surely, $\delta(G_{n,M})\leq 1$. If $k\in \N$ is fixed, and $M=n(\log n+(k-1)\log\log n+\omega(1))/2$, then, almost surely, $\delta(G_{n,M})\geq k$.
\end{lemma}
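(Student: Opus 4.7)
The plan is a moment argument on the number of low-degree vertices in $G_{n,M}$, carried out directly or transferred from $G(n,p)$ via Lemma~\ref{switch}. The central input is the Poisson-type asymptotic
\[
\P(d(v)=j)=(1+o(1))\frac{(np)^{j}}{j!}e^{-np}
\]
for a fixed vertex $v$, bounded $j$ and $np=\Theta(\log n)$, where $p=M/\binom{n}{2}$; it holds in $G(n,p)$ directly, and in $G_{n,M}$ by the standard Stirling-type comparison between the hypergeometric and binomial tails.

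For part (b), let $Y$ count the vertices of degree at most $k-1$ in $G_{n,M}$. The sum $\E[Y]=n\sum_{j<k}\P(d(v)=j)$ is dominated by the $j=k-1$ term, and substituting $np=\log n+(k-1)\log\log n+\omega(1)\pm o(1)$ into the Poisson approximation gives
\[
\E[Y]=(1+o(1))\frac{n(np)^{k-1}}{(k-1)!}e^{-np}=\frac{1+o(1)}{(k-1)!}\,e^{-\omega(1)}\longrightarrow 0,
\]
after the factors of $n$ and $(\log n)^{k-1}$ cancel between the polynomial prefactor and $e^{-np}$. Markov's inequality, applied in $G_{n,M}$, yields $\P(Y\ge 1)\to 0$, proving $\delta(G_{n,M})\ge k$ almost surely.

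For part (a), let $X$ count vertices of degree at most $1$ in $G_{n,M}$. The analogous computation gives $\E[X]=(1+o(1))(1+np)ne^{-np}=(1+o(1))e^{\omega(1)}\to\infty$, so Markov alone is insufficient and my plan is the second moment method. For distinct $u,v$, the joint probability $\P(d(u)\le 1,\,d(v)\le 1)$ and the product $\P(d(u)\le 1)\P(d(v)\le 1)$ differ only by a multiplicative factor $1+O(p)=1+O(\log n/n)$, the correction arising from the shared potential edge $uv$ and, in $G_{n,M}$, the mild negative correlation induced by conditioning on the total edge count. A direct calculation then gives $\E[X(X-1)]=(1+o(1))\E[X]^2$, so that Chebyshev's inequality yields $\P(X=0)=o(1)$, proving $\delta(G_{n,M})\le 1$ almost surely.

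The main obstacle is the second moment estimate for part (a): the independence heuristic is transparent, but bookkeeping the shared potential edge between $u$ and $v$ together with the negative correlation induced by fixing $e(G_{n,M})=M$ needs some care. A cleaner equivalent route is to first prove the claim in $G(n,p)$, where degree events supported on disjoint edge sets are genuinely independent, and then transfer back by noting that ``$\delta\le 1$'' is a monotone decreasing property in the number of edges and that $e(G(n,p))$ is concentrated around $p\binom{n}{2}=M$ by Chernoff (Lemma~\ref{chernoff}). Apart from this single second-moment input, the whole argument reduces to routine Poisson asymptotics and Markov's inequality.
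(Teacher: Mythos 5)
The paper gives no proof of this lemma at all---it is quoted as a classical fact with a citation to Bollob\'as's monograph---and your first-moment/second-moment argument (Poisson degree asymptotics, Markov for the $0$-statement on low-degree vertices, Chebyshev via the near-independence of $\{d(u)\leq 1\}$ and $\{d(v)\leq 1\}$ for the $1$-statement) is precisely the standard proof of that classical result, and it is sound as outlined, including your correct handling of the monotonicity transfer for part (a). One small caution: for part (b) you must indeed carry out the computation directly in $G_{n,M}$ (or use monotonicity of the property $\delta\geq k$), as you ultimately do, since routing the first-moment bound through Lemma~\ref{switch} would cost a factor of $2n$ that the $e^{-\omega(1)}$ decay cannot in general absorb.
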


Furthermore, in almost every $n$-vertex random graph process $\{G_M\}_{M\geq 0}$, if $M\geq 25n\log n$, then there are no vertices with small degree in $G_M$.

\begin{lemma}\label{mind} In almost every random graph process $\{G_M\}_{M\geq 0}$ with $n$ vertices, if $M\geq 25n\log n$, then $\delta(G_M)\geq M/n$.
\end{lemma}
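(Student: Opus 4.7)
The plan is to combine the switching Lemma~\ref{switch} with a Chernoff bound on single vertex degrees (Lemma~\ref{chernoff}), handling all relevant values of $M$ simultaneously via a geometric grid together with the monotonicity of vertex degrees along the random graph process.

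Fix $\alpha = 11/10$. Set $M_0 = \lceil 25n\log n\rceil$ and define recursively $M_{i+1} = \lfloor \alpha M_i\rfloor$, stopping at the first index $k$ with $M_k \geq \binom{n}{2}$; since the sequence grows geometrically, $k = O(\log n)$. For each $i$, let $p_i = M_i/\binom{n}{2}$, so that in $G(n,p_i)$ the degree of any vertex $v$ is $\mathrm{Bin}(n-1, p_i)$ with expectation exactly $2M_i/n$. Applying Lemma~\ref{chernoff} with $\e = 1 - \alpha/2 = 9/20$,
\[
\P\bigl(d(v) < \alpha M_i/n\bigr) \leq 2\exp\!\bigl(-\e^2 \cdot 2M_i/(3n)\bigr) \leq n^{-3}
\]
for all sufficiently large $n$, using $M_i \geq 25n\log n$ together with the numerical fact $2\e^2 \cdot 25/3 = 27/8 > 3$. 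A union bound over the $n$ vertices gives $\delta(G(n,p_i)) \geq \alpha M_i/n$ with probability $1 - o(n^{-2})$. Lemma~\ref{switch} then transfers this to $G_{n, M_i}$ with probability $1 - o(n^{-1})$, and a further union bound over the $O(\log n)$ indices $i$ yields, with probability $1 - o(1)$, that $\delta(G_{M_i}) \geq \alpha M_i/n$ holds simultaneously for every $i$.

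To finish, observe that vertex degrees along the random graph process are non-decreasing in $M$, so for any $M \in [M_i, M_{i+1}]$,
\[
\delta(G_M) \geq \delta(G_{M_i}) \geq \alpha M_i/n \geq M_{i+1}/n \geq M/n,
\]
as required. The only real obstacle, and it is mainly a bookkeeping issue, is pinning down $\alpha$ so that the Chernoff decay comfortably absorbs the $n$ vertex union bound, the $2n$ switching penalty, and the $O(\log n)$ grid union bound; the constant $25$ in the hypothesis supplies abundant slack, and any $\alpha \in (1,2)$ close enough to $1$ would in fact do.
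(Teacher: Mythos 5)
Your proof is correct and rests on the same core estimate as the paper's: for $p=M/\binom{n}{2}$ the degree of a vertex in $G(n,p)$ is $\mathrm{Bin}(n-1,p)$ with mean $2M/n\geq 50\log n$, a Chernoff bound (Lemma~\ref{chernoff}) kills the lower tail, and a union bound over vertices followed by Lemma~\ref{switch} transfers the statement to $G_{n,M}$. The only structural difference is how uniformity over $M$ is obtained. The paper drives the per-vertex failure probability down to $o(n^{-4})$ (using $\e=1/2$, so the exponent is $50\log n/12$), which after the vertex union bound and the $2n$ switching factor leaves $o(n^{-2})$ per value of $M$, and then union-bounds directly over all $\leq\binom{n}{2}$ values of $M$; this survives only because $50/12>4$. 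You instead union-bound over a geometric grid of $O(\log n)$ values $M_i$ and interpolate via the monotonicity of degrees along the process, which is why you must prove the slightly stronger bound $\delta(G_{M_i})\geq \alpha M_i/n$ at the grid points. Both arguments are sound; yours needs far less slack in the exponents at the cost of the extra grid-and-monotonicity step (plus the trivial bookkeeping of capping the last grid point at $\binom{n}{2}$), and it would generalise more readily if the constant $25$ were reduced.
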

\begin{proof} For each $M\geq 25\log n$, let $p_M=M/\binom{n}{2}$ and $G=G(n,p)$. For each $v\in V(G)$,  $\E(d_{G}(v))=(n-1)p_M=2M/n\geq 50\log n$, so that, by Lemma~\ref{chernoff},
\[
\P(d_{G}(v)\leq M/n)\leq 2\exp(-50\log n/12)=o(n^{-4}).
\]
Therefore, by Lemma~\ref{switch}, in almost every $n$-vertex random graph process $\{G_M\}_{M\geq 0}$, in every $G_M$ with $M\geq 25n\log n$, we have $\delta(G_M)\geq M/n$.
\end{proof}

As shown by Bollob\'as~\cite{bollo84}, in almost every random graph process, when the graph first has minimum degree 2 there are no vertices with low degree which are close together (see Lemma~\ref{bollopath}).  We wish to show that, in almost every random graph process, such a property holds for \emph{every} graph with minimum degree at least 2. Therefore, we repeat the argument in~\cite{bollo84} to record some more detail.

\lem\label{bollopath} Fix an integer $k>1$, and let $n,M\in \N$. If $19n\log n/40\leq M\leq 27n\log n$, then, with probability $1-o(n^{-1/4})$, no two vertices with degree at most $\log n/ 36$ are within distance $k$ of each other in the random graph $G=\GG_{n,M}$.
\ma
\pr
Let $N=\binom{n}{2}$ and $d=\log n/36$. For each $a,b\in \N$ with $a\leq b$, use $(b)_a$ for the falling factorial $b(b-1)\cdots (b-a+1)$, and recall the simple result that, if $0\leq a\leq b\leq c$, then $(b)_a/(c)_a\leq (b/c)^a$. Note also that, for sufficiently large $n$,
\begin{equation}\label{soreeyes}
\frac{Mn}{N-2n}=\frac{2M}{n-5}\leq 60\log n\;\;\;\text{ and }\;\;\;
(2n-3d)M/N\geq 1.8\log n.
\end{equation} 

The expected number of paths in $G=\GG_{n,M}$ with some length $1\leq i\leq k$ whose endpoints together have degree at most some $j\leq 2d$ is at most
\begin{align*}
\sum_{i=1}^kn^{i+1}\sum_{j=1}^{2d}\binom{2n}{j}\binom{N-2n+3}{M-j-i}\Big/\binom{N}{M}
&=\sum_{i=1}^kn^{i+1}\sum_{j=1}^{2d}\binom{2n}{j}\cdot \frac{(N-M)_{2n-3-i-j}}{(N)_{2n-3-i-j}}\cdot \frac{(M)_{j+i}}{(N-2n+3+j+i)_{j+i}}
\\
&\leq\sum_{i=1}^kn^{i+1}\sum_{j=1}^{2d}\binom{2n}{j}\cdot \left(1-\frac{M}{N}\right)^{2n-3-j-i}\cdot \left(\frac{M}{N-2n}\right)^{j+i}
\\
&\leq 2dk\cdot n^{k+1}\cdot \left(\frac{en}{d}\right)^{2d}\cdot e^{-(2n-3d)M/N}
\cdot \left(\frac{M}{N-2n}\right)^{2d+k}\\
&\leq 2dkn\cdot \left(\frac{Mn}{N-2n}\right)^k\cdot \left(\frac{eMn}{d(N-2n)}\right)^{2d}\cdot e^{-(2n-3d)M/N}\\
&\overset{\eqref{soreeyes}}{\leq} 2dkn\cdot (60\log n)^k\cdot \left(\frac{60e\log n}{d}\right)^{2d}\cdot e^{-1.8\log n}\\
&\leq 2dk\cdot (60\log n)^k\cdot e^{18d}\cdot n^{-0.8}\\
&\leq 2dk\cdot (60\log n)^k\cdot e^{\log n/2}\cdot n^{-0.8}=o(n^{-1/4}).
\end{align*}
Thus, with probability $1-o(n^{-1/4})$, no two vertices with degree at most $d$ are within distance $k$ of each other in $G$.
\oof

The property in Lemma \ref{bollopath} is non-monotone, so we need slightly more work to show that it holds throughout almost every random graph process.

\lem\label{mepath} Let $k>1$ be fixed.
In almost every random graph process $\{G_M\}_{M\geq 0}$ with $n$ vertices, if $19n\log n/40\leq M\leq 25n\log n$, then there are no two vertices of degree at most $\log n/40$ within a distance $k$ of each other in $G_M$.
\ma
\pr Let $N=\binom{n}{2}$, $d=\log n/36$ and $M_0=n^{7/8}$. Letting $p=M_0/N\leq 3n^{-9/8}$, the probability the random graph $\GG(n,p)$ has a vertex with degree at least $d/10$ is at most
\[
n\binom{n}{d/10}p^{d/10}\leq n\left(\frac{10e n p}{d}\right)^{d/10}\leq n\left(\frac{30e}{dn^{1/8}}\right)^{d/10}=o(n^{1-d/100})=o(n^{-3}).
\]
From this, and Lemma~\ref{switch}, we have that, in almost every graph process $\{G_M\}_{M\geq 0}$ with $n$ vertices, for each $j$, $0\leq j\leq N-M_0$, $\Delta(G_{j+M_0}-G_j)\leq d/10$.

There are at most $25n^{1/8}\log n$ values of $j\in\N$ for which $19n\log n/40\leq jM_0\leq 25n\log n+M_0$. By Lemma \ref{bollopath} and a union bound, almost surely, for each such $j$, $G_{jM_0}$ contains no two vertices with degree at most $d$ within a distance $k$ of each other. Now, suppose for some $M$, $19n\log n/40\leq M\leq 25n\log n$, $G_M$ has two vertices $x$ and $y$ with degree at most $\log n/40$ which are at most distance $k$ apart in $G_M$. Find the smallest $j$ for which $jM_0\geq M$. Then $19n\log n/40\leq jM_0\leq 25n\log n+M_0$, but in $G_{jM_0}$ both $x$ and $y$ have degree at most $\log n/40+d/10=d$ and are still within a distance $k$ of each other in $G_{jM_0}$, a contradiction. Therefore, the property in the lemma almost surely holds for all $M$ with $19n\log n/40\leq M\leq 25n\log n$.
\oof

\begin{corollary}\label{farmin} In almost every random graph process $\{G_M\}_{M\geq 0}$ with $n$ vertices, in each $G_M$ with $M\geq 19n\log n/40$ there are no two vertices with degree at most $M/10^3n$ within a distance 5 of each other.
\end{corollary}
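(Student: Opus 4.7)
The plan is to split the range $M \geq 19n\log n/40$ into the two regimes $19n\log n/40 \leq M \leq 25n\log n$ and $M \geq 25n\log n$, and apply one of the previously proved lemmas in each. Since the conclusions of both lemmas hold in almost every random graph process, combining them by intersecting the two almost sure events yields the corollary.

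For the lower regime, note that whenever $M \leq 25n\log n$ we have $M/10^3 n \leq 25\log n /10^3 < \log n/40$, so any vertex of $G_M$ with degree at most $M/10^3 n$ in particular has degree at most $\log n/40$. Lemma~\ref{mepath}, applied with $k=5$, then states that almost surely no two such low-degree vertices lie within distance $5$ of each other in any such $G_M$, which is exactly what we want in this range.

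For the upper regime, Lemma~\ref{mind} asserts that, in almost every random graph process, every $G_M$ with $M\geq 25n\log n$ satisfies $\delta(G_M)\geq M/n$. Since $M/n > M/10^3 n$, there are no vertices with degree at most $M/10^3 n$ at all, and the statement of the corollary is vacuously true.

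There is essentially no obstacle here: the corollary is just a convenient packaging of Lemma~\ref{mepath} and Lemma~\ref{mind}, and the only computation needed is the elementary verification that the cutoff $M/10^3 n$ used in the corollary lies below the cutoff $\log n/40$ used in Lemma~\ref{mepath} throughout the range $M \leq 25n\log n$, and above the minimum degree bound $M/n$ guaranteed in Lemma~\ref{mind} throughout $M \geq 25n\log n$.
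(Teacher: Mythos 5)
Your proof is correct and is essentially identical to the paper's: both split at $M=25n\log n$, apply Lemma~\ref{mepath} with $k=5$ below the cutoff (using $M/10^3n\leq \log n/40$ there) and Lemma~\ref{mind} above it. The only quibble is that at $M=25n\log n$ one has $M/10^3n=\log n/40$ exactly rather than a strict inequality, but since only $\leq$ is needed this is immaterial.
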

\begin{proof}
Almost surely, by Lemma \ref{mepath}, in each $G_M$ with $19n\log n/40\leq M\leq 25n\log n$ there are no two vertices with degree less than $\log n/40\geq M/10^3n$ within a distance $5$ of each other. Almost surely, by Lemma \ref{mind}, for each $G_M$ with $M\geq 25n\log n$, we have $\delta(G_M)> M/10^3n$, so no vertices with degree at most $M/10^3n$ exist.
\end{proof}

\subsection{Expansion from minimum degree conditions}\label{secmed}
Minimum degree conditions occur naturally in our $(1/2+\e)$-residual subgraphs in Lemma~\ref{smallexpall}. The following lemma will allow us to convert these conditions into expansion properties.

\lem\label{mindegexp3} For each $\d>0$, there exists $C>0$ such that the following holds for $p\geq C/n$. With probability $1-o(n^{-3})$, in the random graph $G=\GG(n,p)$ there are no two sets $A,B\subset V(G)$ with $|A|\leq 1/\delta p$, $|B|\leq 10|A|$ and $e(A,B)\geq \d pn|A|$.
\ma
\pr 
For each value of $t$, with $1\leq t\leq 1/\delta p$, let $p_t$ be the probability that there are two sets $A,B\subset V(G)$ with $|A|=t$, $|B|=10t$ and $e(A,B)\geq \d pnt$. Note that for such a pair $A$ and $B$, considering that some edges may be counted twice in $e(A,B)$ (if $A$ and $B$ overlap), then there are at least $\d pnt/2$ distinct edges appearing between $A$ and $B$. Thus, for sufficiently large $C$,
\begin{align*}
p_t\leq \binom{n}{t}\binom{n}{10t}\binom{10t^2}{\d pnt/2}p^{\d pnt/2}
&\leq \left(\frac{en}{t}\right)^{11t}\left(\frac{20et}{\d n}\right)^{\d pnt/2} \leq \left(\frac{20e^2}{\d}\right)^{11t}\left(\frac{20et}{\d n}\right)^{\d Ct/2-11t}
\\
&\leq \left(\frac{400e^3t}{\d^2 n}\right)^{11t}.
\end{align*}
Therefore, if $1\leq t<\log n$, then,  $p_t=o((1/\sqrt{n})^{11})=o(n^{-4})$. If $\log n\leq t\leq 1/\delta p$, then $p_t\leq (400e^3/\delta^3 pn)^{11\log n}\leq (400e^3/\delta^3 C)^{11\log n}=o(n^{-4})$, for sufficiently large $C$. Therefore, no two such sets exist for any $t\leq 1/\delta p$ with probability $1-o(n^{-3})$.
\oof

\subsection{Resilient small set expansion}\label{secallsize}
We can now combine the work in Sections~\ref{secsmall} and~\ref{secmed} to prove Lemma~\ref{smallexpall}.
\begin{proof}[Proof of Lemma~\ref{smallexpall}]
Note that we can assume that $\d\leq 10^{-4}$ and $C\geq 16/\d$.
Almost surely, by Lemma~\ref{isol}, for each $G_M$ with $M\leq 19n\log n/40$ we have $\delta(G_M)<2$, so we need only consider the range $M\geq 19n\log n/40$.
By Corollary~\ref{farmin}, for each $M\geq 19n\log n/40$, there are no vertices with degree at most $M/10^3n$ within distance $5$ of each other.

By Lemma \ref{switch}, and Lemma \ref{mindegexp3} applied with $\delta'=\min\{\delta/8,1/C\}$, we almost surely get the following property for each graph $G_M$, $M\geq 19n\log n/40$, with $p=M/\binom{n}{2}\geq 19\log n/20$.
\begin{enumerate}[label=\bfseries R]\addtocounter{enumi}{0}
\item There are no two sets $A,B\subset V(G_M)$ with $|A|\leq C/p$, $|B|\leq 10|A|$ and $e(A,B)\geq \d pn|A|/8$.
\label{RR2}
\end{enumerate}
The random graph process then has the property in the lemma, as follows.

Fixing $M\geq 19n\log n/40$, $p=M/\binom{n}{2}$ and $\e>0$, suppose $\delta(G_M)\geq 2$ and let $H$ be a $(1/2+\e)$-residual spanning subgraph of~$G_M$. Note that $\delta(H)\geq 2$. For each $v\in V(H)$, take $\min\{d_H(v),\delta pn/4\}$ edges incident to $v$, and use these edges to create a new graph $H_0$ with $e(H_0)\leq \delta pn^2/4\leq \delta M$ and $d_{H_0}(v)\geq \min\{d_H(v),\delta pn/4\}\geq 2$ for each $v\in V(H)$.

Let $A\subset V(H)$ with $|A|\leq C/p$. Let $A_1\subset A$ be the set of vertices not within distance 2 of another vertex in $A$ in $H_0$, and let $A_2=A\setminus A_1$. As $\delta(H_0)\geq 2$, $|N_{H_0}(A)|\geq 2|A_1|+|N_{H_0}(A_2)|$. Now, two vertices in $A$ with degree at most $M/10^3n$ in $G_M$ cannot be within distance 2 of the same vertex in $A$ in $H_0$ or each other. Therefore, at least $|A_2|/2$ vertices in $A_2$ must have degree at least $M/10^3n$ in $G_M$, and hence degree $\delta pn/4$ in $H_0$. Thus, $e(A_2,A_2\cup N_{H_0}(A_2))\geq \delta pn|A_2|/8$, so that, by~\ref{RR2}, $|N_{H_0}(A_2)|\geq 2|A_2|$. Therefore, $|N_{H_0}(A)|\geq 2|A_1|+|N_{H_0}(A_2)|\geq 2|A|$. Hence, as required, $H_0\subset H$ is a $(2,C/p)$-expander with $e(H_0)\leq \delta M$ which spans $H$.
\end{proof}

\newpage

\section{Born resilience of Hamiltonicity in the $k$-core}\label{seccore}

Our key result in this section is the following lemma, showing that the $k$-core of $G(n,p)$ almost always resiliently contains a sparse spanning $(2,\Theta(1/p))$-expander.

\begin{lemma}\label{coresparseexpand} For each $\d,C>0$, there exists $k_0$ such that, for each $k\geq k_0$, $n\in \N$ and $p>0$ with $k/n\leq p\leq 2\log n/n$, the following holds. With probability $1-o(n^{-3})$, $G=G(n,p)$ has the property that any spanning $1/2$-residual subgraph of $G^{(k)}$ contains a spanning $(2,C/p)$-expander with at most $\delta pn^2$ edges.
\end{lemma}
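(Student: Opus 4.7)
The plan is to construct, inside any spanning $1/2$-residual subgraph $H$ of $G^{(k)}$, the required sparse spanning $(2,C/p)$-expander $H_0$ by a simple degree-capping procedure, with the expansion verified via Lemma~\ref{mindegexp3}. I would set $D := \min\{k/2,\,\delta pn/2\}$ and, for each $v\in V(H)$, pick a set $S_v$ of $D$ edges of $H$ incident to $v$; this is possible since $d_H(v)\ge d_{G^{(k)}}(v)/2\ge k/2\ge D$. Defining $H_0$ to be the spanning subgraph of $H$ with edge set $\bigcup_v S_v$, one immediately has $V(H_0)=V(H)$, $\delta(H_0)\ge D$, and $e(H_0)\le nD\le \delta pn^2/2$, so only the $(2,C/p)$-expansion of $H_0$ needs verification.

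For the expansion, assume for contradiction that some $A\subseteq V(H_0)$ with $|A|\le C/p$ satisfies $|N_{H_0}(A)|<2|A|$. Setting $B:=A\cup N_{H_0}(A)$ gives $|B|<3|A|\le 10|A|$, and since $\delta(H_0)\ge D$ and $H_0\subseteq G$, $e_G(A,B)\ge e_{H_0}(A,B)\ge D|A|/2$. I would apply Lemma~\ref{mindegexp3} to $G$ with $\delta_0:=\min\{1/C,\,D/(2pn)\}$: by construction, $|A|\le C/p\le 1/(\delta_0 p)$ (from $\delta_0\le 1/C$) and $e_G(A,B)\ge D|A|/2\ge \delta_0 pn|A|$ (from $\delta_0\le D/(2pn)$), contradicting the conclusion of Lemma~\ref{mindegexp3}.

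The main obstacle will be verifying that Lemma~\ref{mindegexp3} is actually applicable with this $\delta_0$, since its hypothesis requires $p\ge C(\delta_0)/n$ for a threshold $C(\delta_0)$ that grows like $O(1/\delta_0^3)$, while $\delta_0$ itself may depend on $n$ through $D/(2pn)$. In the regime $p\le k/(\delta n)$ one has $D=\delta pn/2$ and $\delta_0=\min\{1/C,\delta/4\}$ is a fixed constant depending only on $\delta$ and $C$, so the lemma applies directly. In the regime $p>k/(\delta n)$ one has $D=k/2$, and $\delta_0=\min\{1/C,\,k/(4pn)\}$: if $pn\le Ck/4$ then $\delta_0=1/C$ is again fixed, while if $pn>Ck/4$ then $\delta_0=k/(4pn)$ does vary with $p,n$, but crucially $\delta_0 pn=k/4$ stays bounded below. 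Re-examining the proof of Lemma~\ref{mindegexp3} in this last sub-case, its key inequality $\delta_0 pn\ge 44$ reduces to $k\ge 176$, and the remaining probability estimates can then be checked to yield $o(n^{-3})$ once $n$ is sufficiently large.

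Taking $k_0=k_0(\delta,C)$ large enough to meet all of the above conditions (in particular $k_0\ge 176$ and large enough in terms of the threshold $C(\min\{1/C,\delta/4\})$ from Lemma~\ref{mindegexp3}) will complete the argument, producing the required spanning $(2,C/p)$-expander $H_0\subseteq H$ with $e(H_0)\le \delta pn^2$.
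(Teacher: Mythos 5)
Your construction of $H_0$ and your handling of the regime $pn=O(k)$ (where $D=\delta pn/2$ and $\delta_0$ is a genuine constant) match the paper's treatment of the ``critical'' case $p\leq 4k/n$, and that part is fine. The gap is in the sub-case you flag at the end, $pn>Ck/4$, where $D=k/2$ and $\delta_0=k/(4pn)$. First, you have misidentified the binding constraint in the proof of Lemma~\ref{mindegexp3}: for $t\geq \log n$ the final bound is $(400e^3/\delta^3pn)^{11\log n}$, so what must stay bounded below is $\delta^3pn$ (and also $\delta^2 pn\gtrsim 20e$, needed for $20et/\delta n\leq 1$), not $\delta pn$. With $\delta_0=k/(4pn)$ and $pn$ as large as $2\log n$, $\delta_0^3pn=k^3/64(pn)^2\to 0$, and the estimate collapses. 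Even if you restrict attention to $t\leq C/p$, the per-size bound becomes $\bigl[(en/t)^{11}(80etp/k)^{kt/8}\bigr]$, and at $t=n/\log n$ its $t$-th root is roughly $(e\log n)^{11}\cdot(O(1)/k)^{k/8}$, which exceeds $1$ for fixed $k$ and large $n$. Worse, the statement you would need is actually false: for fixed $k$ and $pn=2\log n$, a first-moment computation shows the expected number of sets $A$ with $|A|=n/\log n$ and $e(G[A])\geq k|A|/16$ is $\bigl[(e\log n)(16e/k)^{k/16}\bigr]^{|A|}\to\infty$, and indeed such sets exist whp (the maximum edge-density of subgraphs on $n/\log n$ vertices is $\Theta(\log\log n)$ per vertex). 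So the event $e_G(A,A\cup N_{H_0}(A))\geq k|A|/4$ from which you derive your contradiction genuinely occurs, and no choice of $k_0$ rescues the argument: a degree floor of $k/2=\Theta(1)$ simply cannot force expansion of sets of size $\Theta(n/pn)$ by edge counting when $pn\to\infty$.

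The paper's supercritical argument avoids this by splitting the range of $|A|$. Sets with $|A|\leq n/2c^2$ (where $c=pn$) are handled by Lemma~\ref{coresmallexp}, the Krivelevich--Lubetzky--Sudakov claim that any subgraph of $G(n,c/n)$ with minimum degree $15$ is a $(2,n/2c^2)$-expander; at that much smaller scale the union bound does close with only a constant degree floor. For $n/2c^2\leq|A|\leq C/p$ one needs most vertices of $A$ to have degree $\Omega(pn)$ rather than $\Omega(k)$, and this is supplied by Lemma~\ref{dpncore}: all but $n/4c^2$ vertices of $G^{(k)}$ have degree at least $pn/4$, so at least half of any such $A$ contributes $\delta pn/2$ edges in $H_0$, and Lemma~\ref{mindegexp3} then applies with a genuinely constant parameter $\delta/4$. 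You would need both of these additional inputs (or substitutes for them) to repair your argument.
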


Following Krivelevich, Lubetzky and Sudakov~\cite{KLS14}, we will consider two cases: the \emph{critical case} where $p\leq 4k/n$ (in Section~\ref{seccrit}) and the \emph{supercritical} case where $p\geq 4k/n$ (in Section~\ref{secsuper}). We use a lemma from~\cite{KLS14}, but note that its proof relies on a relatively simple calculation and that our methods for demonstrating Hamiltonicity in the $k$-core (for large $k$) are quite different from those used by Krivelevich, Lubetzky and Sudakov in~\cite{KLS14}.


We will show first, in Section~\ref{propsec}, that Theorem~\ref{coreres} follows from Lemma~\ref{coresparseexpand}, Theorem~\ref{newnewtheorem} and some standard results on the likely size and appearance of the $k$-core.

\subsection{Typical properties of the $k$-core}\label{propsec}
\L uczak~\cite{Luc87} showed that, when the $k$-core first appears in the random graph process it is likely to be linear in size, as follows.

\begin{theorem}[\cite{Luc87}]\label{corelinear} For each $k\geq 3$, in almost every $n$-vertex random graph process $\{G_M\}_{M\geq 0}$, for each $M$, either $G^{(k)}=\emptyset$ or $|G^{(k)}|\geq n/5000$.\hfill\qed
\end{theorem}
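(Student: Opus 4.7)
The plan is to exploit the monotonicity of the $k$-core under edge addition: if $G \subseteq G'$ then $G^{(k)} \subseteq G'^{(k)}$, because $G^{(k)}$ still has minimum degree at least $k$ in the supergraph. Hence $|V(G_M^{(k)})|$ is non-decreasing in $M$, and the conclusion for all $M$ is equivalent to the assertion that this sequence never takes a value in $[1, n/5000)$; equivalently, that no step $M$ satisfies both $V(G_{M-1}^{(k)}) = \emptyset$ and $1 \leq |V(G_M^{(k)})| < n/5000$. By Lemma~\ref{switch} combined with a union bound over $M \in \{0,1,\ldots,\binom{n}{2}\}$, it suffices to show that for every $p \in [0,1]$,
\[
\Pr\bigl(1 \leq |V(G(n,p)^{(k)})| < n/5000\bigr) = o(n^{-3}).
\]

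The heart of the argument is a first-moment calculation. Whenever $|V(G^{(k)})| = m$, the $m$ vertices of $V(G^{(k)})$ span a subgraph with minimum degree at least $k$ and hence with at least $km/2$ edges, giving
\[
\Pr\bigl(|V(G^{(k)})| = m\bigr) \leq \binom{n}{m}\binom{\binom{m}{2}}{km/2} p^{km/2} \leq \left(\frac{en}{m}\right)^m \left(\frac{emp}{k}\right)^{km/2}.
\]
Writing $x = m/n$ and $\lambda = pn$, the bound becomes $\bigl[\,e\cdot (e\lambda/k)^{k/2} \cdot x^{k/2-1}\,\bigr]^{xn}$; since $k \geq 3$ gives $k/2 - 1 \geq 1/2$, the bracketed quantity tends to $0$ as $x \to 0$, so summing over $m \in [k+1, n/5000)$ yields the required estimate whenever $\lambda \leq C_k$ for a suitable constant. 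For $p$ with $\lambda = pn > C_k$, I would argue instead that $|V(G^{(k)})|$ is already comfortably linear: a standard peeling/branching-process analysis of the $k$-core gives $\E|V(G^{(k)})| = (\alpha(\lambda) + o(1))n$ for an analytic function $\alpha$ satisfying $\alpha(\lambda) > 1/5000$ for every $\lambda$ above the $k$-core threshold, with concentration around the mean via the bounded-differences inequality on the edge-exposure martingale. For $p \geq 25\log n/n$, Lemma~\ref{chernoff} gives $\delta(G) \geq k$ with probability $1 - o(n^{-3})$ and hence $G^{(k)} = G$, so the conclusion is immediate.

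The main obstacle is obtaining uniformity in $p$, and specifically matching these regimes near the critical threshold $\lambda_k^*$, where both vanishing and emergence of a linear-sized $k$-core are possible outcomes; ruling out intermediate sizes is precisely the content of the first-moment estimate above. A delicate point is that for small $m$ (close to $k+1$) the first-moment bound is only polynomially small in $n$, so one must exploit the additional constraint that $V(G^{(k)})$ is \emph{maximal} (rather than merely an $m$-set with induced minimum degree $\geq k$); this refinement, together with the telescoping in $M$ afforded by monotonicity, is what pushes the estimate to $o(n^{-3})$.
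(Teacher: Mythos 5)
The paper does not prove this statement; it is quoted directly from \L uczak~\cite{Luc87} (hence the \qed\ in the statement), so there is no internal proof to compare against. Your attempt has genuine gaps. The most serious is the reduction: you ask for $\P\bigl(1\le |G(n,p)^{(k)}|<n/5000\bigr)=o(n^{-3})$ for \emph{every} $p$, and this is false for $k=3$. At $p=c/n$ with $c$ constant and below the $3$-core threshold, $G(n,p)$ contains a $K_4$ with probability $\Theta(c^6 n^{-2})$, and conditionally on that event the $3$-core is, with probability close to $1$, exactly that $K_4$; so the event you need to exclude has probability $\Omega(n^{-2})$. Maximality of the core does not rescue the first-moment bound here --- an isolated $K_4$ in a sparse random graph \emph{is} the maximal subgraph of minimum degree $3$ --- and no amount of ``telescoping'' changes the per-$p$ probability. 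The correct use of the monotonicity you identify is different: since both ``$G_M$ contains a min-degree-$k$ subgraph on fewer than $n/5000$ vertices'' and ``$|G_M^{(k)}|\ge n/5000$'' are increasing events in $M$, the bad event for the whole process is contained in the union of (i) some small min-degree-$k$ subgraph being present at a single deterministic time $M^\dagger$ just above the threshold, and (ii) the core failing to be linear at $M^\dagger$. Both of these need only be $o(1)$, not $o(n^{-3})$, and (i) is exactly your first-moment sum evaluated at one value of $p$, where the $m=k+1$ term contributes $O(n^{-2})$ and is acceptable.

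The second gap is the supercritical regime. Concentration of $|G^{(k)}|$ via the bounded-differences inequality on the edge-exposure martingale does not work: the core is not Lipschitz under single-edge changes, since deleting one edge can trigger a peeling cascade that removes a linear number of vertices --- this instability is precisely the mechanism behind the discontinuous emergence of the $k$-core, i.e.\ behind the theorem you are trying to prove. Establishing $\E|G^{(k)}|=(\alpha(\lambda)+o(1))n$ with $\alpha(\lambda)>1/5000$ uniformly above the threshold, together with concentration, is the substantive content of \L uczak's work (and of the later analyses of Pittel--Spencer--Wormald type cited in Section~\ref{propsec}); asserting it as ``standard'' leaves the main point unproved. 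If you want a self-contained argument, you must either carry out that analysis or, as the paper does, cite it.
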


The threshold of appearance for the $k$-core has been well-studied (see~\cite{Luc91,PSW96,PVW11}), showing that, as the first $kn/2$ edges are added in the  $n$-vertex random graph process, the $k$-core is likely to be empty, as follows (see also~\cite{KLS14}).
\begin{theorem}\label{coreempty} For each $k\geq 20$, in almost every  $n$-vertex random graph process $\{G_M\}_{M\geq 0}$, if $M\leq kn/2$, then $G_M^{(k)}=\emptyset$.
\end{theorem}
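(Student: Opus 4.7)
The plan is a first-moment calculation combined with a model switch and a degree-concentration estimate. First, the $k$-core is monotone under edge addition (if $G\subseteq G'$ then $G^{(k)}\subseteq G'^{(k)}$), so it suffices to prove $G_{M_0}^{(k)}=\emptyset$ almost surely at $M_0=\lfloor kn/2\rfloor$. Then by Lemma~\ref{switch} with $p=M_0/\binom{n}{2}\le k/(n-1)$, the task reduces to showing
\[
\P(G(n,p)^{(k)}\neq\emptyset)\;=\;o(n^{-1}).
\]

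Union-bounding over the possible vertex set $S$ of a non-empty $k$-core, and using that $\delta(G[S])\ge k$ forces $e(G[S])\ge k|S|/2$,
\[
\P(G(n,p)^{(k)}\neq\emptyset)\;\le\;\sum_{s=k+1}^{n}\binom{n}{s}\binom{\binom{s}{2}}{\lceil ks/2\rceil}p^{\lceil ks/2\rceil}.
\]
Setting $y=s/n$ and using the standard estimates $\binom{n}{s}\le e^{nH(y)}$, where $H(y)=-y\log y-(1-y)\log(1-y)$, together with the Chernoff-type bound $\binom{\binom{s}{2}}{ks/2}p^{ks/2}\le \exp((ks/2)(1+\log y-y))$, each summand is at most $\exp(n\,g_k(y))$ with
\[
g_k(y)\;=\;H(y)+\tfrac{ky}{2}(1+\log y-y).
\]
Since $1+\log y-y\le 0$ on $(0,1]$, $g_k$ is strictly decreasing in $k$ at each fixed $y$, so it suffices to treat $k=20$. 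A direct calculation gives $g_{20}(y)\le -c<0$ on $(0, 0.595]$ (for instance $g_{20}(1/2)\approx -0.27$ and $g_{20}(0.59)<0$), so the contribution from $s\le 0.595\,n$ to the first-moment sum is at most $n\exp(-cn)=o(n^{-1})$.

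The range $s\in(0.595\,n, n]$ lies outside the reach of the edge-counting bound, and I would handle it via the observation that every $v\in V(G^{(k)})$ has $G$-degree at least $k$, so $|V(G^{(k)})|\le n-N_{<k}$, where $N_{<k}$ denotes the number of vertices of degree less than $k$ in $G$. Poisson approximation for $p\le k/(n-1)$ gives $\E N_{<k}\ge(\P(\mathrm{Poi}(k)<k)-o(1))n\ge 0.45\,n$ once $k\ge 20$. The main obstacle is upgrading this expectation to a concentration statement with tail $o(n^{-1})$: a calculation of the pairwise covariance of two low-degree indicators (conditioning on whether the connecting edge is present) shows $\mathrm{Var}(N_{<k})=O(n)$, so Chebyshev only yields $O(n^{-1})$. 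To gain the extra factor I would compute the fourth central moment, using that three- and four-point indicator correlations are controlled by products of pairwise covariances, obtaining $\E[(N_{<k}-\E N_{<k})^4]=O(n^2)$; Markov then gives $\P(N_{<k}<0.405\,n)=O(n^{-2})=o(n^{-1})$, which forces $|V(G^{(k)})|\le 0.595\,n$ with the required probability. Combining this with the first-moment estimate on $s\le 0.595\,n$ completes the proof.
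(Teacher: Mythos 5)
The paper does not actually prove Theorem~\ref{coreempty}: it is quoted from the literature on the $k$-core threshold (\cite{Luc91,PSW96,PVW11}, see also \cite{KLS14}), where one shows that the threshold constant $c_k$ for the emergence of the $k$-core exceeds $k$. Your proposal is a self-contained, elementary substitute, and the overall architecture is sound: monotonicity of the $k$-core reduces to $M_0=\lfloor kn/2\rfloor$, the model switch reduces to a $o(n^{-1})$ bound in $G(n,p)$, a first-moment count over candidate core vertex sets handles $|S|\leq 0.595n$, and the observation that a core vertex has $G$-degree at least $k$ (so $|V(G^{(k)})|\leq n-N_{<k}$, with $N_{<k}$ concentrated near $\P(\mathrm{Poi}(k)<k)\,n\geq 0.45n$) rules out larger cores. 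Your identification that the first moment genuinely fails for $y$ near $1$ (since the whole graph has average degree $k$) and must be supplemented by a degree-fluctuation argument is exactly right. What this buys over the paper's route is independence from the $k$-core threshold machinery; what it costs is the restriction to moderately large $k$ where the numerics of $g_k$ work out, which is harmless here.

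Two points need tightening. First, the claim that $g_{20}(y)\leq -c<0$ uniformly on $(0,0.595]$ is false: $g_{20}(y)\to 0$ as $y\to 0^+$, so the bound $n\exp(-cn)$ does not cover small $s$. The conclusion survives, because for $s=o(n)$ the summand is at most roughly $\exp\bigl(s\bigl[(k/2-1)\log(s/n)+O(1)\bigr]\bigr)=n^{-(9-o(1))s}$ for $k=20$ and $s\geq 21$, which is far smaller than $n^{-2}$ per term; but you should treat the ranges $s\leq \eta n$ and $\eta n<s\leq 0.595n$ separately. Second, the fourth-moment bound $\E[(N_{<k}-\E N_{<k})^4]=O(n^2)$ is asserted via cumulant estimates that you do not carry out, and this is the least rigorous step. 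A cleaner route is to prove the concentration of $N_{<k}$ directly in $G_{n,M_0}$ (before, or instead of, switching models for this part): the edge-replacement martingale there has only $M_0\leq kn/2=O(n)$ steps and $N_{<k}$ changes by at most $4$ when one edge is swapped, so Azuma's inequality gives $\P(|N_{<k}-\E N_{<k}|\geq 0.04n)\leq 2\exp(-\Omega(n))$, which is more than enough and avoids the covariance bookkeeping entirely.
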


Theorem~\ref{coreres} is implied by Lemma~\ref{coresparseexpand} and Theorems~\ref{newnewtheorem},~\ref{corelinear}, and~\ref{coreempty}, as follows.

\begin{proof}[Proof of Theorem~\ref{coreres}] 
Using Theorem~\ref{newnewtheorem}, let $\d,C>0$ be such that, if $p\geq C/n$, then $G=G(n,p)$ has the following property with probability $1-o(n^{-3})$.
\begin{enumerate}[label = \textbf{S\arabic*}]\addtocounter{enumi}{0}
\item If $H$ is a $(1/2+\e)$-residual subgraph of $G$ with $|H|\geq n/5000$ which contains a spanning $(2,C/p)$-expander with at most $\delta pn^2$ edges, then $H$ is Hamiltonian. \label{R41}
\end{enumerate}
Using Lemma~\ref{coresparseexpand}, let $k_0\geq C$ be such that, for each $k\geq k_0$ and $k/n\leq p\leq 2\log n/n$, $G=G(n,p)$ has the following property with probability $1-o(n^{-3})$.
\begin{enumerate}[label = \textbf{S\arabic*}]\addtocounter{enumi}{1}
\item If $G^{(k)}\neq\emptyset$, then any spanning $1/2$-residual subgraph of $G^{(k)}$ contains a spanning $(2,C/p)$-expander with at most $\delta pn^2$ edges. \label{R42}
\end{enumerate}
Increase $k_0$ if necessary, so that, using Lemmas~\ref{switch} and~\ref{nonresil}, we have the following property for almost every $n$-vertex random graph process $\{G_M\}_{M\geq 0}$.
\begin{enumerate}[label = \textbf{S\arabic*}]\addtocounter{enumi}{2}
\item For each $M\geq k_0n/2$, there is no $U\subset V(G_M)$ with $|U|\geq n/5000$ which is $(1/2+\e)$-resiliently Hamiltonian. \label{R43}
\end{enumerate}

Now, let $k\geq k_0$, and consider the random graph process $\{G_M\}_{M\geq 0}$ with $n$ vertices. By Lemma~\ref{isol}, $\delta(G_{M})$ is almost surely at least $k$ for each $M\geq (n-1)\log n$, in which case $G_M^{(k)}=G_M$. Thus, almost surely, by Theorem~\ref{hamres}, for each $M\geq (n-1)\log n$,  $G_M$ is $(1/2-\e)$-resiliently Hamiltonian but not $(1/2+\e)$-resiliently Hamiltonian. Furthermore, by Lemma~\ref{coreempty}, almost surely, if $M\leq kn/2$, then $G^{(k)}_M=\emptyset$. Therefore, we can assume that $kn/2\leq M\leq (n-1)\log n$.

Almost surely,~\ref{R43} holds and, for each $kn/2\leq M\leq (n-1)\log n$, $G_M$ satisfies~\ref{R41} and~\ref{R42} with $p=M/\binom{n}{2}$, so that $k\leq pn\leq 2\log n$. Almost surely, by Theorem~\ref{corelinear}, for each $kn/2\leq M\leq (n-1)\log n$, $G_M^{(k)}=\emptyset$ or $|G_M^{(k)}|\geq n/5000$. Then, if $G_M^{(k)}\neq \emptyset$, by~\ref{R41} and~\ref{R42}, $G_M^{(k)}$ is $(1/2-\e)$-resiliently Hamiltonian, but, by~\ref{R43}, not $(1/2+\e)$-resiliently Hamiltonian.
\end{proof}

\subsection{Critical $k$-core expansion}\label{seccrit}

We can now prove Lemma~\ref{coresparseexpand} in the critical case.

\begin{proof}[Proof of Lemma~\ref{coresparseexpand} when $p\leq 4k/n$] Note that we can assume that $\delta \leq 1/8$. Let $k_0$ be sufficiently large that, by Lemma~\ref{mindegexp3}, the following holds in $G(n,p)$ with $p\geq k_0/n$ with probability $1-o(n^{-3})$.
\begin{enumerate}[label = \textbf{T}]
\item There are no two sets $A,B\subset V(G)$ with $|A|\leq C/p$, $|B|\leq 10|A|$ and $e(A,B)\geq \delta pn|A|/2$. \label{early}
\end{enumerate}
Let $k\geq k_0$ and $k/n\leq p\leq 4k/n$. Let $G=G(n,p)$, so that, with probability $1-o(n^{-3})$, $G$ satisfies \ref{early}.

Let $H$ be a spanning $1/2$-residual subgraph of $G^{(k)}$. Note that $\delta pn\leq \delta(4k)\leq k/2$, so that $\delta(H)\geq k/2\geq \delta pn/2$. For each vertex $v\in V(H)$ take $\delta pn/2$ edges incident to $v$, and use these edges to form a subgraph $H_0\subset H$ with $e(H_0)\leq \delta pn^2$. By~\ref{early}, for each $A\subset V(H_0)$ with $|A|\leq C/p$, we have $|N_{H_0}(A)\cup A|\geq 10|A|$, and hence $H_0$ is a $(2,C/p)$-expander.
\end{proof}



\subsection{Supercritical $k$-core expansion}\label{secsuper}
For the resilient expansion of small sets in the $k$-core, we will use the following lemma taken from a result of Krivelevich, Lubetzky and Sudakov~\cite[Claim 3.1]{KLS14}.

\begin{lemma}~\label{coresmallexp} Let $p=c/n$ with $1<c<\log^2 n$ and let $G=G(n,p)$. With probability $1-o(n^{-3})$, any subgraph $H\subset G$ with $\delta(H)\geq 15$ is a $(2,n/2c^2)$-expander. \hfill\qed
\end{lemma}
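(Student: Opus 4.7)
\textbf{Proof plan for Lemma~\ref{coresmallexp}.} The plan is a single union bound on the pairs $(U, S = U \cup N_H(U))$ that would witness a failure of expansion. Suppose $H \subseteq G = G(n, c/n)$ has $\delta(H) \geq 15$ and fails the expansion condition; then there exist $U \subseteq V(H)$ with $u := |U| \leq n/(2c^2)$ and $|N_H(U)| \leq 2u - 1$. Setting $S = U \cup N_H(U)$, we have $|S| \leq 3u$, and since every $H$-edge incident to $U$ lies in $S$, summing $H$-degrees over $U$ gives $2e_H(U) + e_H(U, N_H(U)) \geq 15u$, hence $e_G(S) \geq e_H(S) \geq 15u/2$. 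A one-vertex count also shows $|N_H(U)| \geq 16 - u$, so failure actually forces $u \geq 6$.

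It is therefore enough to show, with probability $1 - o(n^{-3})$, that no pair $(U, S)$ with $6 \leq u \leq n/(2c^2)$, $U \subseteq S \subseteq V(G)$, $|S| \leq 3u$, and $e_G(S) \geq 15u/2$ exists in $G$. Standard estimates $\binom{a}{b} \leq (ea/b)^b$ give, for each admissible $u$, a contribution at most
\[
\binom{n}{3u}\binom{3u}{u}\binom{\binom{3u}{2}}{\lceil 15u/2\rceil}p^{\lceil 15u/2\rceil} \;\le\; h(u) := A^u\, c^{15u/2}\,(u/n)^{9u/2},
\]
where $A = (e/3)^3 \cdot 2^3 \cdot (3e/5)^{15/2}$ is an absolute constant.

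To bound $\sum_{u\ge 6} h(u)$, I would split at $u = \log n$. For $6 \le u \le \log n$, the factor $(u/n)^{9u/2}$ with $u \ge 6$ forces $h(u) \le O(\mathrm{polylog}(n)\cdot n^{-27})$, and the ratio $h(u+1)/h(u) = O(\mathrm{polylog}(n)/n^{9/2}) \ll 1$ makes the partial sum dominated by $h(6) = o(n^{-3})$. For $\log n \le u \le n/(2c^2)$, the constraint $u \le n/(2c^2)$ yields $(u/n)^{9u/2} \le (2c^2)^{-9u/2}$, and so $h(u) \le (A/2^{9/2})^u c^{-3u/2}$; once $c$ exceeds a suitable absolute threshold this is $\le c^{-u}$, and the remaining geometric sum is $\le 2c^{-\log n} = o(n^{-3})$ provided $\log c \ge 4$. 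For smaller $c$ the statement is essentially vacuous, since $G(n, c/n)$ almost surely contains no subgraph of minimum degree $15$ until $c$ exceeds the (constant) $15$-core threshold.

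The crux is the exponent bookkeeping: the reward $(u/n)^{9u/2}$ coming from $u \le n/(2c^2)$ must dominate both the combinatorial cost $\binom{n}{3u}$ and the edge-probability cost $c^{15u/2}$. The minimum-degree constant $15$ is calibrated precisely so that, after cancellation, the net power of $c$ is negative (namely $c^{-3u/2}$), which becomes decisive once $c$ is sufficiently large; weaker minimum-degree hypotheses would fail to produce a negative power of $c$ and the union bound would break down.
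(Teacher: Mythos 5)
The paper offers no proof of this lemma: it is quoted verbatim from Krivelevich, Lubetzky and Sudakov~\cite[Claim 3.1]{KLS14} with a \qed, so your argument has to be judged on its own. Your skeleton is the right (and standard) one: a failure produces a witness set $S=U\cup N_H(U)$ with $|S|\leq 3u$ spanning at least $\lceil 15u/2\rceil$ edges of $G$, the observation that $u\geq 6$ is correct, the per-$u$ bound $h(u)=A^u c^{15u/2}(u/n)^{9u/2}$ checks out, and your treatment of the range $6\leq u\leq\log n$ is fine (the ratio $h(u+1)/h(u)=O(\mathrm{polylog}(n)\cdot n^{-9/2})$ and $h(6)=O((\log n)^{90}n^{-27})$).

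The gap is in the range of $c$. Your bound for $\log n\leq u\leq n/(2c^2)$ closes only when $c$ exceeds a large absolute constant: with $A\approx 234$ you need $c^{1/2}\geq A/2^{9/2}\approx 10$ and $\log c\geq 4$, i.e.\ $c\gtrsim 100$. Your fallback for smaller $c$ --- vacuity below the $15$-core threshold --- fails on two counts. First, that threshold is an absolute constant around $21$, so for $c$ roughly between $21$ and $100$ the $15$-core is nonempty whp, the statement is not vacuous, and your union bound does not apply; the lemma is asserted for \emph{all} $1<c<\log^2 n$. Second, even for $c$ below the threshold you would need the $15$-core to be empty with probability $1-o(n^{-3})$, not merely whp, and in the critical window it is nonempty with constant probability. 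The repair is within reach of your own calculation: drop the $\binom{3u}{u}$ factor (you never need to locate $U$ inside $S$), which reduces $A/2^{9/2}$ to about $1.3$; observe that $u\mapsto\log h(u)$ is convex, so the sum over the large-$u$ range is at most $n$ times the value at the two endpoints, and the endpoint $u=n/(2c^2)$ gives $(1.3\,c^{-3/2})^{n/(2c^2)}$, which is superpolynomially small once $c\geq 1.2$, say; and for $1<c\leq 1.2$ replace the appeal to the core threshold by a direct first-moment bound on vertex sets $W$ spanning at least $15|W|/2$ edges, which is what ``no subgraph of minimum degree $15$'' requires and which does give $o(n^{-3})$ there. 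The constants only just mesh, which is exactly why the minimum-degree constant is calibrated at $15$ --- your closing remark about the exponent bookkeeping is correct, but the bookkeeping has to be carried out over the whole range of $c$, not only for $c$ large.
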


To show that medium-sized sets resiliently expand in the $k$-core of a typical random graph $G(n,p)$ with $p\geq 4k/n$, we wish to use Lemma~\ref{mindegexp3}. To do so, we will show that it is very likely that most of the vertices in the $k$-core here have at least $pn/4$ neighbours in the $k$-core, as follows.


\begin{lemma}\label{dpncore} There exists $k_0\geq 0$ such that the following holds for any $k\geq k_0$ and $p=c/n$ with $4k\leq c\leq 2\log n$. With probability $1-o(n^{-3})$, in $G=G(n,p)$ there are at most $n/4c^2$ vertices with degree at most $pn/4$ in $G^{(k)}$. 
\end{lemma}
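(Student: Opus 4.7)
Set $c = pn$ and $W = V(G)\setminus V(G^{(k)})$, and let $Y$ denote the set to be bounded. For any $v\in V(G^{(k)})$ with $d_{G^{(k)}}(v)\le pn/4 = c/4$, we have $d_G(v,W) = d_G(v) - d_{G^{(k)}}(v) \ge d_G(v) - c/4$, so $v\in B\cup Y_0$, where
\[
B = \{v\in V(G):\,d_G(v)<c/2\},\qquad Y_0 = \{v\in V(G):\,d_G(v,W)\ge c/4\}.
\]
Hence $|Y|\le |B| + |Y_0|$, and it suffices to prove the three bounds $|W|\le n/48c^2$, $|B|\le n/8c^2$ and $|Y_0\setminus W|\le n/16c^2$, each with failure probability $o(n^{-3})$; together they give $|Y|\le n/8c^2 + n/48c^2 + n/16c^2 < n/4c^2$.

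For the bound on $|W|$, use the peeling characterisation of the $k$-core: every $v\in W$ has fewer than $k$ neighbours in $V\setminus W$, so $e_G(W,V\setminus W) < k|W|$. For any fixed $W_0$ with $|W_0|=m\le n/2$, we have $\E\,e_G(W_0,V\setminus W_0)\ge cm/2 \ge 2km$ since $c\ge 4k$, so Chernoff (Lemma~\ref{chernoff}) gives $P(e_G(W_0,V\setminus W_0)<km)\le\exp(-cm/16)$. A union bound over $W_0$ gives
\[
\binom{n}{m}\exp(-cm/16) \le \exp\bigl(m\log(en/m) - cm/16\bigr),
\]
which is summable to $o(n^{-3})$ over $m\in[n/48c^2,n/2]$ once $k_0$ is large enough that $\log(48ec^2)<c/32$ (valid since $c\ge 4k_0$). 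The atypical regime $|W|>n/2$ is handled by the complementary density argument: then the small set $V\setminus W$ has minimum internal degree $\ge k$, hence contains at least $k|V\setminus W|/2$ edges, an event of probability $e^{-\Omega(n)}$.

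For $|B|$, fix any $S\subseteq V(G)$ of size $s\le n/4$. The events $\{d_G(v)<c/2\}_{v\in S}$ imply $\{d_G(v,V\setminus S)<c/2\}_{v\in S}$, and these latter events depend on pairwise disjoint edge sets and so are independent. Since $\E\,d_G(v,V\setminus S)\ge 3c/4$, each has probability at most $\exp(-c/24)$ by Chernoff, so $P(|B|\ge s)\le\binom{n}{s}\exp(-cs/24)$; taking $s=n/8c^2$ and $k_0$ large makes this $o(n^{-3})$.

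For $|Y_0\setminus W|$, fix any disjoint pair $(W_0,T_0)$ with $|W_0|=m\le n/48c^2$ and $|T_0|=t$. The events $\{d_G(v,W_0)\ge c/4\}_{v\in T_0}$ are independent, each depending only on the edges between one $v$ and $W_0$. Since $\E\,d_G(v,W_0)=mp\le 1/(48c)$, a standard binomial tail bound gives
\[
P(d_G(v,W_0)\ge c/4) \le (4emp/c)^{c/4} = (4em/n)^{c/4} \le (e/12c^2)^{c/4}.
\]
Summing $\binom{n}{m}\binom{n}{t}(e/12c^2)^{tc/4}$ over $m\le n/48c^2$ with $t = n/16c^2$, the dominant exponent contribution is $-(n/c)\log(12c^2/e)$, which dominates the entropy terms of order $(n/c^2)\log c$ for $c\le 2\log n$ and $k_0$ large, giving total probability $o(n^{-3})$. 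Specialising to $W_0=W$ (allowed by the $|W|\le n/48c^2$ bound) gives $|Y_0\setminus W|\le n/16c^2$, completing the proof. The main obstacle is this last step: we must control $Y_0$ uniformly over the random set $W$, and the key enabling feature is that the events $\{d_G(v,W_0)\ge c/4\}$ factor into independent per-vertex tails, so their joint probability can be raised to the $t$-th power and still dominate the joint union-bound entropy over both $W_0$ and $T_0$, provided $k_0$ is chosen large enough.
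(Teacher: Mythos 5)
Your decomposition $Y\subseteq B\cup(Y_0\setminus W)$ and the three separate bounds are sound in structure, and the bounds on $|B|$, on $|W|$ for $|W|\in[n/48c^2,n/2]$, and on $|Y_0\setminus W|$ all go through (up to harmless constant slips in the Chernoff exponents, e.g.\ $\exp(-cm/16)$ should really be $\exp(-cm/24)$). However, there is a genuine gap in your dismissal of the case $|W|>n/2$. You claim that $V\setminus W$ having minimum internal degree at least $k$, and hence at least $k|V\setminus W|/2$ internal edges, is an event of probability $e^{-\Omega(n)}$. This is false for $|V\setminus W|$ of order $n$: a set of size $s$ has expected internal edge count about $s^2c/2n$, and $ks/2\leq cs/8$ is \emph{below} this expectation once $s\geq n/4$ (recall $k\leq c/4$). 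So for $|W|\in(n/2,3n/4]$, say, the event you invoke is not rare at all, and your proof does not establish $|W|\leq n/48c^2$ in that regime, which you need in order to specialise $W_0=W$ in the $Y_0$ step. The gap is patchable: if $|W|>n/2$, take $W_0$ to be the first $n/2$ vertices removed in the peeling order; each such vertex had fewer than $k$ neighbours among the not-yet-removed vertices, which include $V\setminus W_0$, so $e_G(W_0,V\setminus W_0)<kn/2$, and this is already excluded by the $m=n/2$ instance of your own union bound. But as written, the step fails.

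For comparison, the paper's proof is considerably shorter and avoids the $k$-core peeling set $W$ entirely. It shows, by a single union bound over sets $A$ of the one fixed size $n/4c^2$, that every such $A$ satisfies $e_G(A,V\setminus A)\geq |A|\cdot pn/4$; it then iteratively deletes vertices of degree at most $pn/4$ from $G$ (not from $G^{(k)}$), stopping after $n/4c^2$ deletions. The deleted set $A$ would violate the edge bound if it reached size $n/4c^2$, so the process stops early with $\delta(G-A)>pn/4\geq k$, whence $G-A\subseteq G^{(k)}$ and every vertex outside $A$ has degree more than $pn/4$ in the core. This sidesteps both the need to control $|W|$ and the two-set union bound you use for $Y_0$; your route is correct in outline but buys nothing over it and costs the extra case analysis where the gap occurs.
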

\begin{proof} For each $A\subset V(G)$ with $|A|=n/4c^{2}$, the expected number of edges between $A$ and $V(G)\setminus A$ is
\[
p|A||V(G)\setminus A|\geq 3p|A|n/4=3n/16c.
\]
Therefore, the probability there is some set $A\subset V(G)$ with $|A|=n/4c^{2}$ and $e_G(A,V(G)\setminus A)< |A|\cdot pn/4$ is, using Proposition~\ref{Chern}, at most
\[
\binom{n}{n/4c^{2}}\exp(-3n/(9\cdot 4\cdot 16c))\leq \Big(4ec^2\Big)^{n/4c^{2}}\exp(-n/200c)\leq\exp((3+2\log c-c/50)n/4c^2)=o(n^{-3}),
\]
where $c\geq 4k_0$ is sufficiently large.

Suppose then that $G$ has no such set $A$. Remove vertices with degree at most $pn/4$ iteratively from $G$ until no such vertices exist, or we have removed $n/4c^{2}$ vertices. Let $A$ be the set of removed vertices. Then, noting that $e_G(A,V(G)\setminus A)<|A|\cdot pn/4$, from our supposition we cannot have that $|A|=n/4c^2$. Thus, we must have that $|A|<n/4c^{2}$, and, therefore, as we stopped removing vertices before $n/4c^2$ vertices were removed, $\delta(G-A)> pn/4\geq k$. Hence, $G-A\subset G^{(k)}$ and $V(G)\setminus A$ is a set of at least $(1-1/4c^2)n$ vertices with degree more than $pn/4$ in $G^{(k)}$.
\end{proof}

We can now prove Lemma~\ref{coresparseexpand} in the supercritical case.

\begin{proof}[Proof of Lemma~\ref{coresparseexpand} when $p\geq 4k/n$] Note that we can assume that $\delta<1/8$. Using Lemmas~\ref{mindegexp3} and~\ref{dpncore}, let $k_0\geq 30/\delta$ be sufficiently large that, if $p\geq 4k_0/n$, then the following hold in $G=G(n,p)$ with probability $1-o(n^{-3})$.
\begin{enumerate}[label = \textbf{U\arabic{enumi}}]
\item There are no two sets $A,B\subset V(G)$ with $|A|\leq C/p$, $|B|\leq 10|A|$ and $e(A,B)\geq \delta pn|A|/4$. \label{early0}
\item There are at most $n/4c^2$ vertices in $G^{(k)}$ with degree at most $pn/4$. \label{ear3}
\end{enumerate}
Assume that $G=G(n,p)$ has these properties. By Lemma~\ref{coresmallexp}, with probability $1-o(n^{-3})$, $G$ has the following property.
\begin{enumerate}[label = \textbf{U\arabic*}]\addtocounter{enumi}{2}
\item If $H_0\subset G$ has minimum degree 15, then it is a $(2,n/c^2)$-expander.\label{Rnew32}
\end{enumerate}

Let $H$ be a spanning $1/2$-residual subgraph of $G^{(k)}$. Note that $\delta(H)\geq k/2\geq 15$. For each vertex $v\in V(H)$, take $\min\{d_H(v),\delta pn/2\}\geq 15$ edges incident to $v$, and use these edges to form a subgraph $H_0\subset H$ with $e(H_0)\leq \delta pn^2$.

Let $A\subset V(H_0)$ with $n/2c^2\leq |A|\leq C/p$. By~\ref{ear3}, there are at least $|A|/2$ vertices in $A$ with degree at least $\delta pn/2$ in $H_0$. Thus, by~\ref{early0}, we have $|N_{H_0}(A)\cup A|\geq 10|A|$, so that $|N_{H_0}(A)|\geq 2|A|$. As $\delta(H_0)\geq 15$, by~\ref{Rnew32}, for each $A\subset V(H_0)$ with $|A|\leq n/2c^2$, $|N_{H_0}(A)|\geq 2|A|$.
Thus, for every $A\subset V(H_0)$ with $|A|\leq C/p$, $|N_{H_0}(A)|\geq 2|A|$, and therefore $H_0$ is a $(2,C/p)$-expander.
\end{proof}
This completes the proof of Lemma~\ref{coresparseexpand}, and thus Theorem~\ref{coreres} is proved.

\section*{Acknowledgements}
The author would like to thank two anonymous referees for their comments on this paper.

\bibliographystyle{plain}
\bibliography{rhmreferences}


\end{document}